\newtheorem{theorem}{Theorem}[section]
\newtheorem{lemma}[theorem]{Lemma}
\newtheorem{corollary}[theorem]{Corollary}
\newtheorem{proposition}[theorem]{Proposition}
\newtheorem{definition}[theorem]{Definition}
\newtheorem{example}[theorem]{Example}
\newtheorem{remark}[theorem]{Remark}
\newtheorem{notation}[theorem]{Notation}
\title[Triangulated structure for Bondarenko's Categories]{Triangulated structure for Bondarenko's Categories}
\date{\today}
\author[Germán Benitez]{Germán Benitez}
\address{\noindent Departamento de Matem\'atica, Instituto de Ciências Exatas, Universidade Federal do Amazonas,  Manaus AM, Brazil}
\email{gabm03@gmail.com}
\author[Gustavo Costa]{Gustavo Costa}
\address{\noindent Departamento de Matem\'atica, Instituto de Ciências Exatas, Universidade Federal do Amazonas,  Manaus AM, Brazil}
\email{costagustt@gmail.com}
\author[Lucas Queiroz Pinto]{Lucas Q. Pinto}
\address{\noindent Instituto de Matem\'atica e Estadistica, Universidade de São Paulo,  Sao Paulo SP, Brazil}
\email{lucasqueirozp94@gmail.com}
\begin{document}

\begin{abstract}

V. Bondarenko and Y. Drozd gives a description of all indecomposable objects in a category of representations of posets, nowadays known as the Bondarenko’s category. This category was essential for V. Bekkert and H. Merklen classify all indecomposable objects of the derived category of gentle algebras. In view of this connection with the derived category, which possess a triangulated structure, and of the fact that in this paper we show that the Bondarenko’s category is not an abelian category, it is reasonable to contemplate the existence of a triangulated structure for the Bondarenko's category.  

In this paper we introduce a triangulated category structure over a quotient of Bondarenko’s category, which will allow to use the techniques of triangulated category to study representations of posets.

\end{abstract}
\subjclass{Primary 16G20, 18G80}

\keywords{Representations of poset, triangulated category}

\date{}


\maketitle

\tableofcontents    


\maketitle


\section{Introduction}

I. M. Gelfand in the International Congress of Mathematicians which was held at Nice in 1970 proposed to obtain a description of the indecomposable representations of the quiver (see \cite{Gel71})
	$$
	\xymatrix{ \bullet\ar@/^/[r]^{b} & \bullet\ar@/^/[l]^{a}\ar@/_/[r]_{c} & \bullet\ar@/_/[l]_{d}}\ \ \ \ \ \ \ \text{with $ba=dc$,}
	$$
in connection with the classification of Harish-Chandra-modules at a given singular point for $\textup{SL}_2(\mathbb{R})$. This problem was solved in 1973 by L. A. Nazarova and A. V. Roiter (see \cite{NR73}) by reducing it to a certain matrix problem. The matrix problems studied by V. Bondarenko and Y. Drozd around to 80's (see \cite{Bon75,BD82}) allowed to V. Bondarenko in 1988 to reconsider the problem of Gelfand (cf. \cite{Bon88,Bon91}) and introduces a wider class of matrix problems than in \cite{NR73} consisting on block matrices indexed by poset with involution (called $\mathcal{S}$-representation or representation of poset with involution). From these works, in 2003, V. Bekkert and H. Merklen presented the Bondarenko's category (also, called category of $\mathcal{S}$-representations) to classify the indecomposable objects of the derived category of gentle algebras (see \cite{BM03}). The category of $\mathcal{S}$-representations was introduced in \cite{BD82} and is characterized by objects and morphisms represented by block matrices indexed by a poset with involution, and the objects themselves were initially introduced in \cite{Bon75}.

In \cite{BM03}, V. Bekkert and H. Merklen have constructed a functor arriving into certain Bondarenko's category with a particular poset and involution. This approach has been adapted to classify the indecomposable objects of the derived category of Skewed-Gentle algebras in \cite{BNM03}, also for certain Algebra of Dihedral Type in \cite{GV16} and took advantage of this to discuss the shape of the corresponding components of the Auslander-Reiten quiver containing these objects. Moreover, the techniques used by V. Bekkert and H. Merklen in \cite{BM03} were the starting point to research new classes of algebras in which is possible to describe indecomposable objects of its derived category, such algebras was introduced in  \cite{FGR21} and called string almost gentle (SAG) algebras and SUMP algebras.

It is well-known that the category of representations of poset is an additive category, in Remark~\ref{rem:nonabel} we explain why the category is not an abelian category in general, specifically, the category does not always have kernel or cokernel. It is natural to think that a category without kernel or cokernel has few good tools. Therefore, given the importance of Bondarenko's category, it makes sense to question if there exists some triangulated structure that allows us to study representation of posets. In this paper we give a triangulated category structure over a quotient of Bondareko's category for an arbitrary poset with involution, which will allow to use the techniques of triangulated category to study representations of posets with involution, for instance, five lemma and the cohomology functor $\textup{Hom}(-,-)$ for triangulated category.

This paper is organized as follows. In Section~\ref{sec:Bond} we recall the definition of Bondarenko's category, also we discuss some facts about the non-abelian structure and we introduce the standard triangles which will be useful to define the family of distinguished triangles. Section~\ref{sec:triang} is dedicated to prove that certain quotient of Bondarenko's category is triangulated starting to introduce the quotient category (which studies morphism up to certain equivalence relation) and the family of distinguished triangles.

\section{Bondarenko's Category}
\label{sec:Bond}
In this paper, $\Bbbk$ will denote a field and $\mathcal{Y}$ a linearly ordered set (may be infinite) endowed with an involution $\sigma$. For a block matrix indexed by $\mathcal{Y}$, say $B=(B^{j}_i)_{i,j\in \mathcal{Y}}$ (not necessarily square blocks) with all the entries of all the blocks sitting in $\Bbbk$, the subindex denotes the block row and the superindex denotes the block column. Additionally, we write $B_x$ for the horizontal band and $B^x$ the vertical band (Notice also that some blocks may be empty).


Given two matrices $C=(C^j_i)_{i,j\in \mathcal{Y}}$ and $D=(D^j_i)_{i,j \in \mathcal{Y}}$ (not necessarily square), we say that each  horizontal partition $C_x$ of $C$ is \emph{compatible} with each vertical partition $D^x$ of $D$ if the number of  rows in each $C_x$ is equal to the number of columns in each $D^x$. Similarly, we can define the compatibility between the vertical partitions of $C$ and the horizontal partitions of $D$. Note that these notions of compatibility allow us to do the multiplication $DC$ and $CD$ by blocks, respectively.

Let us define the category $s(\mathcal{Y}, \Bbbk)$ known as \emph{Bondarenko's category of representations of posets} (see \cite{BNM03,BM03,FGR21}). 

\begin{definition}

The objects of $s(\mathcal{Y}, \Bbbk)$ are finite square block matrices $B=(B^{j}_i)_{i,j\in \mathcal{Y}}$  called \emph{representations} or \emph{$\mathcal{Y}$-matrices} satisfying the following conditions. 
\begin{enumerate}[(i)]
\item The horizontal and vertical partitions by blocks of $B$ are compatible. 

\item If $i,j \in \mathcal{Y}$ are such that $\sigma(i)=j$, then all matrices in $B_i$ (resp. $B^i$) have the same number of rows (resp. columns) as all matrices in $B_j$
 (resp. $B^j$).
\item $B^2=0$.
\end{enumerate}

A morphism in 
$s(\mathcal{Y},\Bbbk)$ from $B$ to $C$ is a block matrix $T=(T^{j}_{i})_{i,j \in \mathcal{Y}}$, with entries in $\Bbbk$ such that the following conditions hold.
\begin{enumerate}[(a)]
 \item The horizontal (resp. vertical) partition of $T$ is compatible with the vertical (resp. horizontal) partition of $B$ (resp. $C$). 
\item $TC=BT$.
\item If $i>j$, then $T^j_i=0$, where $<$ is the order relation in the poset $\mathcal{Y}$, i.e., all blocks below the main diagonal are 0.
\item If $\sigma(i)=j$, then $T^i_i=T^j_j$.
\end{enumerate}

\end{definition}

Since the matrices $T$ are upper triangular, in order to have an isomorphism $T$ in the category $s(\mathcal{Y},\Bbbk)$, it is necessary and sufficient that the diagonal blocks $T^i_i$  of $T$ be all invertible. 

Since some blocks may be empty, the zero object in $s(\mathcal{Y}, \Bbbk)$ corresponds to the matrix where all the blocks are empty, which will be denoted by $\mathbb{O}$. And, as  $\textup{Hom}_{s(\mathcal{Y},\Bbbk)}(B,C)$ is a $\Bbbk$-vector space,  $s(\mathcal{Y},\Bbbk)$ is a preadditive $\Bbbk$-category. We will denote the null morphism in $\textup{Hom}_{s(\mathcal{Y},\Bbbk)}(B,C)$ by $\mathbf{0}_B^C$ or simplying $\mathbf{0}$ when no confusion can arise. In \cite{BD82}, the authors stated that $s(\mathcal{Y},\Bbbk)$ is an additive $\Bbbk$-category. In Proposition~\ref{prop:soma}, we will show explicitly the direct sum in $s(\mathcal{Y},\Bbbk)$, to this end, we introduce the following notations, which will be also useful to the next sections.


\vspace{0,5cm}

In what follows, we provide some useful facts about Bondarenko's category. For instance, we exhibit the direct sum of objects, we show that it is not an abelian category and, finally, introduce the standard triangle for Bondarenko's category.

 \begin{notation}
 \label{nota:block} 
 
For any morphism $T\in\textup{Hom}_{s(\mathcal{Y},\Bbbk)}(B,C)$, we define the matrix
    $
\mathbf{C}_T=\left(\left(\mathbf{C}_T\right)_i^j\right)_{i,j \in \mathcal{Y}},
    $
where the each $(i,j)th$ block is the form
    $$
    \left(\mathbf{C}_T\right)_i^j=\left(\begin{array}{cc}
    -B^j_i &T^j_i  \\
    (\mathbf{0}^B_C)_i^j & C^j_i
    \end{array}\right).
    $$
Write $I_D:=\left\{x\in\mathcal{Y}\mid D_x\neq\emptyset\right\}$ for each $D$ square block matrices indexed by $\mathcal{Y}$. Therefore, for each $i,j\in I_{\mathbf{C}_T}=I_B\cup I_C$ we can visualize such blocks in the following table

\vspace{.3cm}

\begin{center}
\begin{tabular}{|c||c|c|c|}
\hline
\mbox{$\left(\begin{array}{cc}
    -B^j_i &T^j_i  \\
    (\mathbf{0}^B_C)_i^j & C^j_i
    \end{array}\right)$}  & \mbox{$j\in I_B\cap I_C$} & \mbox{$j\in I_B\setminus I_C$} &\mbox{$j\in I_C\setminus I_B$} \\
\hline
\hline
$i\in I_B\cap I_C$ & $\left(\begin{array}{cc}
    -B^j_i &T^j_i  \\
    (\mathbf{0}^B_C)_i^j & C^j_i
    \end{array}\right)$	& $\left(\begin{array}{cc}
-B^j_i \\
(\mathbf{0}^B_C)_i^j	
\end{array}\right)$ 	&		$\left(\begin{array}{cc}
T^j_i\\ 
 C^{j}_{i}
\end{array}\right)$	  \\
&&&\\
$i\in I_B\setminus I_C$     				&  $\left(\begin{array}{cc}
-B^j_i & T^j_i\\	
\end{array}\right)$	 & $\left(\begin{array}{cc}
-B^j_i 
\end{array}\right)$	 & 		$\left(T^j_i\right)$	 \\ 
&&&\\
$i\in I_C\setminus I_B$ 			&  $\left(\begin{array}{cc}
(\mathbf{0}^B_C)_i^j	& C^{j}_{i}	
\end{array}\right)$ &$(\mathbf{0}^B_C)_i^j$&$\left(\begin{array}{cc}
C^{j}_{i}	
\end{array}\right)$ \\

\hline
\end{tabular}
\end{center}

\vspace{.3cm}

Note that for the identity morphism $\textup{Id}_B:B\longrightarrow B$, all the blocks of the matrix $\mathbf{C}_{\textup{Id}_B}$ are given by
$$
\left(\begin{array}{cc}
    -B^j_i & (\textup{Id}_B)^j_i  \\
     \mathbf{0}_i^j & B^j_i
\end{array}\right).
$$
\end{notation}


\begin{lemma}\label{lem:CT}
 $\mathbf{C}_T$ is an object in $s(\mathcal{Y},\Bbbk)$, for any morphism $T\in\textup{Hom}_{s(\mathcal{Y},\Bbbk)}(B,C)$. 
\end{lemma}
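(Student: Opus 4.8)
The claim is that $\mathbf{C}_T$, the "cone" block matrix built from a morphism $T\colon B\to C$, is itself an object of $s(\mathcal{Y},\Bbbk)$. So the plan is simply to verify the three defining conditions (i), (ii), (iii) of an object, using that $B,C$ are objects and $T$ is a morphism. First I would check (i), compatibility of the horizontal and vertical block partitions of $\mathbf{C}_T$: the $i$-th block row of $\mathbf{C}_T$ has height equal to (rows of $B_i$) + (rows of $C_i$), and the $j$-th block column has width (columns of $B^j$) + (columns of $C^j$); compatibility for $B$ (resp. for $C$) says rows of $B_x$ = columns of $B^x$ (resp. for $C$), so adding these two equalities gives compatibility for $\mathbf{C}_T$. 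This is bookkeeping on the table displayed in Notation~\ref{nota:block}, with the degenerate cases $i\in I_B\setminus I_C$ etc. handled by the same equalities where one summand is $0$.

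Next I would check (ii): if $\sigma(i)=j$ then the blocks in $(\mathbf{C}_T)_i$ and $(\mathbf{C}_T)_j$ have the same number of rows. Again the row count of $(\mathbf{C}_T)_i$ is (rows of $B_i$) + (rows of $C_i$); since $\sigma(i)=j$ and $B$, $C$ each satisfy (ii), rows of $B_i$ = rows of $B_j$ and rows of $C_i$ = rows of $C_j$, so the sums agree. The column statement is identical with columns in place of rows. The only subtlety is to make sure the involution condition for $B$ and for $C$ is being applied to the \emph{same} pair $(i,j)$, which it is.

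The main point — and the one place where the morphism axioms of $T$ actually enter — is (iii), that $\mathbf{C}_T^{\,2}=0$. Writing $\mathbf{C}_T$ schematically in the $2\times 2$ block form
$$
\mathbf{C}_T=\begin{pmatrix} -B & T\\ \mathbf{0} & C\end{pmatrix},
$$
where each entry is understood as the corresponding $\mathcal{Y}$-indexed block matrix, we compute
$$
\mathbf{C}_T^{\,2}=\begin{pmatrix} B^2 & -BT+TC\\ \mathbf{0} & C^2\end{pmatrix}.
$$
Now $B^2=0$ and $C^2=0$ because $B$ and $C$ are objects of $s(\mathcal{Y},\Bbbk)$, and $-BT+TC=0$ is exactly condition (b) in the definition of a morphism ($TC=BT$). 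Hence $\mathbf{C}_T^{\,2}=0$.

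The only genuine obstacle is a technical/notational one rather than a mathematical one: one must make sure that this $2\times 2$-block description is compatible with the $\mathcal{Y}$-indexing, i.e.\ that reading $\mathbf{C}_T$ as "a $2\times2$ matrix of $\mathcal{Y}$-matrices" and as "a $\mathcal{Y}$-matrix of $2\times2$-ish blocks" (as in the table of Notation~\ref{nota:block}) give the same matrix up to the agreed interleaving of rows/columns, and that the block multiplications above are legitimate — which follows from the compatibilities just verified in (i) together with the compatibility of $T$ with the partitions of $B$ and $C$ (morphism axiom (a)). Once that identification is made explicit, the three verifications are immediate. I would present (i) and (ii) briefly and spend the few lines of real content on the $2\times2$ computation for (iii).
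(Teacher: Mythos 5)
Your proposal is correct and follows essentially the same route as the paper: conditions (i) and (ii) are dismissed because $B$ and $C$ satisfy them, and (iii) is the $2\times 2$ cone computation giving $\mathbf{C}_T^2=\left(\begin{smallmatrix} B^2 & TC-BT\\ \mathbf{0} & C^2\end{smallmatrix}\right)=0$ from $B^2=0$, $C^2=0$ and $TC=BT$; the paper simply carries out this same computation blockwise in each $(i,j)$-entry over $\mathcal{Y}$, which is exactly the "identification" step you flag as the remaining bookkeeping.
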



\begin{proof}
The properties of $\mathbf{C}_T$ involving the compatibility of partitions and involution are consequence to the fact that $B$ and $C$ have those properties. Now, $\mathbf{C}_T^2=0$, follows from $B^2=0$, $C^2=0$, $TC=BT$ and
   {\footnotesize{ \begin{align*}
    (\mathbf{C}_T^2)^j_i & =\left(\sum_{k\in \mathcal{Y}}\left(\begin{array}{cc}
    -B^k_i& T^k_i  \\
     (\mathbf{0}_C^B)^k_i & C^k_{i}
\end{array}\right) \left(\begin{array}{cc}
    -B^j_k &T_k^j  \\
     (\mathbf{0}_C^B)^j_k & C^j_{k}
 \end{array}\right)\right)
 = \left(\begin{array}{cc}
   \sum\limits_{k\in \mathcal{Y}} B^k_i B_k^j  &\sum\limits_{k\in \mathcal{Y}}(T^k_i C^j_k-B^k_iT^j_k)  \\
    (\mathbf{0}^B_C)^j_i &\sum\limits_{k\in \mathcal{Y}} C^k_{i}C^j_k
\end{array} \right)\\
&=  \left(\begin{array}{cc}
   (B^2)_i^j  & (TC-BT)^j_i  \\
    (\mathbf{0}^B_C)^j_i & (C^2)^j_i
\end{array} \right) 
 =  \left(\begin{array}{cc}
 (\mathbf{0}_B^B)^j_i & (\mathbf{0}_B^C)^j_i  \\
 (\mathbf{0}_C^B)^j_i & (\mathbf{0}_C^C)^j_i
\end{array} \right),
    \end{align*}}}
 for all $i,j \in \mathcal{Y}$. Thus, $\mathbf{C}_T$ is an object in $s(\mathcal{Y},\Bbbk)$.

\end{proof}

In terms of Notation~\ref{nota:block}, we will introduce the following matrices associated to each ordered pair of objects $(B,C)$ 
    $$
\widehat{\iota}_B:=\left(\begin{array}{cc}
    (\textup{Id}_{B})^j_i & (\mathbf{0}_B^C)_i^j  \\
\end{array}\right)_{i,j \in \mathcal{Y}}\ \ \ , \ \ \
\iota_C:=\left(\begin{array}{cc}
    (\mathbf{0}_C^B)_i^j & (\textup{Id}_{C})^j_i  \\
\end{array}\right)_{i,j \in \mathcal{Y}},
    $$ 
    $$
    \pi_B:=\left(\begin{array}{cc}
    (\textup{Id}_{B})^j_i\\
    (\mathbf{0}_C^B)_i^j
\end{array}\right)_{i,j \in \mathcal{Y}} \ \ \ \text{ and }\ \ \   \widehat{\pi}_C:=\left(\begin{array}{cc}
(\mathbf{0}_B^C)_i^j\\
    (\textup{Id}_{C})^j_i 
\end{array}\right)_{i,j \in \mathcal{Y}}.$$
For instance, the $(i,j)th$ block
    $$
    (\widehat{\iota}_B)^j_i=\left(\begin{array}{cc}
    (\textup{Id}_{B})^j_i & (\mathbf{0}_B^C)_i^j  \\
\end{array}\right)
    $$
has $r(B_i)$ rows and $c(B^j)+c(C^j)$ columns, where $r(D_x)$ (resp. $c(D^x)$) denote the number of rows (resp. columns) in the horizontal (resp. vertical) band $D_x$ (resp.  $D^x$). Specifically, for $i\in I_B$ and $j\in I_B\cup I_C$

\vspace{.3cm}

\begin{center}
\begin{tabular}{|c||c|c|c|}
\hline
\mbox{$\left(\begin{array}{cc}
    (\textup{Id}_B)^i_i&(\mathbf{0}_B^C)_i^j 
\end{array}\right)$}  & \mbox{$j\in I_B\cap I_C$} & \mbox{$j\in I_B\setminus I_C$} & \mbox{$j\in I_C\setminus I_B$}  \\
\hline
\hline

$i\in I_B$ 			&  $\left(\begin{array}{cc}
 (\textup{Id}_B)^i_i	& (\mathbf{0}_B^C)_i^j
\end{array}\right)$ &$(\textup{Id}_B)^i_i$ & $(\mathbf{0}_B^C)_i^j$\\
\hline
\end{tabular}
\end{center}
Similarly, for the $(i,j)$th blocks of $\iota_C$, $\pi_B$ and $\widehat{\pi}_C$. 

\vspace{.3cm}

The following lemma allows us to check when the latter matrices are morphisms in $s(\mathcal{Y},\Bbbk)$.

\begin{lemma}
\label{lem:morph}
Let $T\in \textup{Hom}_{s(\mathcal{Y},\Bbbk)}(B,C)$ be a morphism. Then,
$$
\iota_C\in \textup{Hom}_{s(\mathcal{Y},\Bbbk)}(C,\mathbf{C}_T)\ \ \ \ \ \text{and }\ \ \ \ \ \pi_B\in \textup{Hom}_{s(\mathcal{Y},\Bbbk)}(\mathbf{C}_T,-B).
$$
Furthermore, $T=\mathbf{0}$ if and only if 
	$$
	\widehat{\iota}_B\in \textup{Hom}_{s(\mathcal{Y},\Bbbk)}(-B,\mathbf{C}_T)\ \ \ \ \ \text{and }\ \ \ \ \widehat{\pi}_C\in \textup{Hom}_{s(\mathcal{Y},\Bbbk)}(\mathbf{C}_T,C).$$
\end{lemma}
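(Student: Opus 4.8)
The plan is to verify, for each of the four block matrices $\iota_C$, $\pi_B$, $\widehat{\iota}_B$ and $\widehat{\pi}_C$, the four defining conditions (a)--(d) of a morphism in $s(\mathcal{Y},\Bbbk)$. The guiding observation is that conditions (a), (c) and (d) are \emph{formal}, in the sense that they only concern the shapes and the supports of the constituent blocks; hence they hold for all four matrices regardless of $T$. By contrast, condition (b) --- the commutation relation --- is the only one that sees $T$, and it is exactly what produces the ``if and only if'' in the second half of the statement.

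First I would settle the three formal conditions once and for all. Condition (c) is immediate: each of the four matrices is supported on the block diagonal, with diagonal block one of $\left(\begin{smallmatrix}\textup{Id}&0\end{smallmatrix}\right)$, $\left(\begin{smallmatrix}0&\textup{Id}\end{smallmatrix}\right)$, $\left(\begin{smallmatrix}\textup{Id}\\0\end{smallmatrix}\right)$ or $\left(\begin{smallmatrix}0\\\textup{Id}\end{smallmatrix}\right)$, so in particular every block strictly below the diagonal vanishes. Condition (d) follows from $B$ and $C$ being objects: by condition (ii), whenever $\sigma(i)=j$ the bands indexed by $i$ and those indexed by $j$ have matching row- and column-counts for both $B$ and $C$, which forces the $(i,i)$ and $(j,j)$ blocks of each of the four matrices (all built from identities and zeros of exactly those sizes) to coincide. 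Condition (a) follows from the fact that $B$, $C$, and hence $\mathbf{C}_T$ (Lemma~\ref{lem:CT}), are square block matrices: from $r(B_x)=c(B^x)$, $r(C_x)=c(C^x)$ and $r((\mathbf{C}_T)_x)=r(B_x)+r(C_x)=c(B^x)+c(C^x)=c((\mathbf{C}_T)^x)$ one reads off, band by band, that the row/column counts of $\iota_C$, $\pi_B$, $\widehat{\iota}_B$, $\widehat{\pi}_C$ are compatible with the relevant partitions of their source and target.

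What remains is condition (b) for each matrix, which is a two-term block computation once the sum $\sum_{k\in\mathcal{Y}}$ collapses (everything in sight is diagonally supported). For $\iota_C$ and $\pi_B$ no hypothesis on $T$ is needed: multiplying $\left(\begin{smallmatrix}0&\textup{Id}_C\end{smallmatrix}\right)$ into the $(i,j)$ block $\left(\begin{smallmatrix}-B^j_i&T^j_i\\0&C^j_i\end{smallmatrix}\right)$ of $\mathbf{C}_T$ returns $\left(\begin{smallmatrix}0&C^j_i\end{smallmatrix}\right)$, which is exactly the $(i,j)$ block of $C\,\iota_C$; likewise $\left(\begin{smallmatrix}-B^j_i&T^j_i\\0&C^j_i\end{smallmatrix}\right)\left(\begin{smallmatrix}\textup{Id}_B\\0\end{smallmatrix}\right)=\left(\begin{smallmatrix}-B^j_i\\0\end{smallmatrix}\right)$ matches the $(i,j)$ block of $\pi_B\,(-B)$. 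Hence $\iota_C\in\textup{Hom}_{s(\mathcal{Y},\Bbbk)}(C,\mathbf{C}_T)$ and $\pi_B\in\textup{Hom}_{s(\mathcal{Y},\Bbbk)}(\mathbf{C}_T,-B)$ unconditionally. For $\widehat{\iota}_B$ and $\widehat{\pi}_C$ the same computation gives that the $(i,j)$ block of $\widehat{\iota}_B\,\mathbf{C}_T$ is $\left(\begin{smallmatrix}-B^j_i&T^j_i\end{smallmatrix}\right)$ while that of $(-B)\,\widehat{\iota}_B$ is $\left(\begin{smallmatrix}-B^j_i&0\end{smallmatrix}\right)$, and that the $(i,j)$ block of $\mathbf{C}_T\,\widehat{\pi}_C$ is $\left(\begin{smallmatrix}T^j_i\\C^j_i\end{smallmatrix}\right)$ while that of $\widehat{\pi}_C\,C$ is $\left(\begin{smallmatrix}0\\C^j_i\end{smallmatrix}\right)$. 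Thus each of these two commutation identities holds if and only if $T^j_i=0$ for all $i,j$, i.e. if and only if $T=\mathbf{0}$; combining this with the fact that (a), (c), (d) always hold yields the equivalence.

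The only point requiring care --- and about the only obstacle --- is bookkeeping: one must stay consistent about the side on which the relation $TC=BT$ is written (equivalently, whether the matrix in question multiplies $\mathbf{C}_T$ from the left or from the right) and about the index conventions (sub-index $=$ block row, super-index $=$ block column), and one must keep in mind that each block product collapses to a single term precisely because all four matrices are diagonally supported. Once those conventions are fixed, each of the eight verifications is a single $2\times 1$ (or $1\times 2$) by $2\times 2$ block multiplication.
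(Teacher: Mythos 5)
Your proof is correct and follows essentially the same route as the paper: the formal conditions (a), (c), (d) are dispatched by inspection of the block shapes, and condition (b) is checked by the same collapsed single-term block multiplications, with the mismatch $T^j_i$ in the $\widehat{\iota}_B$ and $\widehat{\pi}_C$ cases yielding the equivalence with $T=\mathbf{0}$. Your bookkeeping of the source/target sides of the relation and of the sign on $-B$ is in fact slightly more careful than the paper's own computation, which harmlessly drops a sign in the $\widehat{\iota}_B\,\mathbf{C}_T$ display.
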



\begin{proof}
We only show for $\iota_C$, since for $\pi_B$ the proof is analogous. The properties (a), (c) and (d) of the definition of morphism in $s(\mathcal{Y},\Bbbk)$ follow from the latter construction. The equality $C\iota_C=\iota_C\mathbf{C}_T$ is a consequence of the following computations
{\footnotesize{\begin{align*}
  (  C\iota_C)^j_i &=\sum_{k\in \mathcal{Y}}C^k_i\left(\begin{array}{cc}
    (\mathbf{0}_C^B)^j_k & (\textup{Id}_{C})^j_k \\
\end{array}\right)=C^j_i\left(\begin{array}{cc}
    (\mathbf{0}_C^B)^j_j & (\textup{Id}_{C})^j_j \\
\end{array}\right)=\left(\begin{array}{cc}
    (\mathbf{0}_C^B)_i^j & C^j_i   \\
\end{array} \right)\\
&= \left(\begin{array}{cc}
    (\mathbf{0}_C^B)_i^i & (\textup{Id}_{C})^i_i  \\
\end{array}\right) \left(\begin{array}{cc}
    -B^j_i &T^j_i  \\
    (\mathbf{0}_C^B)_i^j & C^j_{i}
\end{array}\right)=
\sum_{k \in \mathcal{Y}}\left(\begin{array}{cc}
    (\mathbf{0}_C^B)_i^k & (\textup{Id}_{C})^k_i   \\
\end{array}\right)\left(\begin{array}{cc}
    -B^j_k &T^j_k  \\
    (\mathbf{0}_C^B)_k^j & C^j_{k}
\end{array}\right)\\
&=(\iota_C\mathbf{C}_T)^j_i,
\end{align*}}}
 for all $i,j \in \mathcal{Y}$. 

 On the other hand, applying the same line reasoning and the fact that
 
    {\footnotesize{$$
   ( B\widehat{\iota}_B)^j_i=\sum_{k\in \mathcal{Y}}B^k_i\left(\begin{array}{cc}
     (\textup{Id}_{B})^j_k  & (\mathbf{0}_B^C)_k^j \\
\end{array}\right)=B^j_i\left(\begin{array}{cc}
     (\textup{Id}_{B})^j_j & (\mathbf{0}_B^C)_j^j  \\
\end{array}\right)=\left(\begin{array}{cc}
     B^j_i & (\mathbf{0}_B^C)_i^j   \\
\end{array} \right)
    $$}}
    {\footnotesize{\begin{align*}
(\widehat{\iota}_B\mathbf{C}_T)^j_i& =
\sum_{k \in \mathcal{Y}}\left(\begin{array}{cc}
     (\textup{Id}_{B})^k_i & (\mathbf{0}_B^C)_i^k   \\
\end{array}\right)\left(\begin{array}{cc}
    B^j_k &T^j_k  \\
    (\mathbf{0}^B_C)_k^j & C^j_{k}
\end{array}\right)
= \left(\begin{array}{cc}
    (\textup{Id}_{B})^i_i &(\mathbf{0}_B^C)_i^i \\
\end{array}\right)\left(\begin{array}{cc}
    B^j_i &T^j_i  \\
    (\mathbf{0}^B_C)_i^j & C^j_{i}
\end{array}\right) \\
&=\left(\begin{array}{cc}
    B^j_i & T^j_i \\
\end{array} \right),
\end{align*} }}
we have, $\widehat{\iota}_B$ is morphism if and only if $T=\mathbf{0}$. Similarly to $\widehat{\pi}_C$.

\end{proof}


We can now to show that $s(\mathcal{Y},\Bbbk)$ is closed by direct sum. 


\begin{proposition}
\label{prop:soma}
Let B, C be objects in $s(\mathcal{Y}, \Bbbk)$. The direct sum in $s(\mathcal{Y}, \Bbbk)$ between $B$ and $C$, is given by 
    $$
    B\oplus C=\left(\begin{array}{cc}
    B^j_i & (\mathbf{0}_B^C)_i^j  \\
    (\mathbf{0}_C^B)_i^j & C^j_i
    \end{array}\right)_{i,j \in \mathcal{Y}}.
    $$

\end{proposition}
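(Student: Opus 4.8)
The plan is to verify that the block matrix $B\oplus C$ as displayed is indeed an object of $s(\mathcal{Y},\Bbbk)$ and then to exhibit the four structure maps and check the universal property of the biproduct. First I would observe that $B\oplus C$ is a well-defined $\mathcal{Y}$-matrix: its horizontal band $(B\oplus C)_i$ has $r(B_i)+r(C_i)$ rows and its vertical band $(B\oplus C)^j$ has $c(B^j)+c(C^j)$ columns, so compatibility of the horizontal and vertical partitions of $B\oplus C$ follows from the corresponding compatibilities for $B$ and for $C$ separately (condition (i)); condition (ii) on the involution follows in the same way, since $\sigma(i)=j$ forces the row/column counts of $B$ to agree on the $i,j$ bands and likewise for $C$, hence for the sums. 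Finally $(B\oplus C)^2=0$ because the matrix is block-diagonal in the two-row/two-column decomposition and $B^2=0$, $C^2=0$; this is the same style of computation as in the proof of Lemma~\ref{lem:CT}, only easier since the off-diagonal entries vanish.

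Next I would write down the candidate injections and projections. Set $\iota_B^{\oplus}:=\left(\begin{smallmatrix}(\textup{Id}_B)_i^j\\ (\mathbf{0}_C^B)_i^j\end{smallmatrix}\right)_{i,j}$, $\iota_C^{\oplus}:=\left(\begin{smallmatrix}(\mathbf{0}_B^C)_i^j\\ (\textup{Id}_C)_i^j\end{smallmatrix}\right)_{i,j}$, and dually $\pi_B^{\oplus}:=\bigl((\textup{Id}_B)_i^j\ (\mathbf{0}_B^C)_i^j\bigr)_{i,j}$, $\pi_C^{\oplus}:=\bigl((\mathbf{0}_C^B)_i^j\ (\textup{Id}_C)_i^j\bigr)_{i,j}$. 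These are exactly the matrices $\pi_B$, $\widehat{\pi}_C$, $\widehat{\iota}_B$, $\iota_C$ of Notation~\ref{nota:block} but with $\mathbf{C}_T$ replaced by the block-diagonal $B\oplus C$ (equivalently, the case $T=\mathbf{0}$ with signs adjusted). I would check that each is a morphism in $s(\mathcal{Y},\Bbbk)$: conditions (a), (c), (d) are immediate from the shape of the blocks (the diagonal blocks are identities, the below-diagonal blocks vanish, and $\sigma$-paired diagonal blocks coincide because this already holds in $B$ and $C$), and condition (b) is the pair of identities $B\,\pi_B^{\oplus}=\pi_B^{\oplus}(B\oplus C)$, $(B\oplus C)\,\iota_B^{\oplus}=\iota_B^{\oplus}B$ together with the analogous ones for $C$; each reduces, block by block, to the trivial identity $B_i^j=B_i^j$ (resp. $C_i^j=C_i^j$) because of the block-diagonal form, and this is essentially the computation already carried out for $\iota_C$ and $\widehat{\iota}_B$ in the proof of Lemma~\ref{lem:morph}.

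Finally I would verify the biproduct relations $\pi_B^{\oplus}\iota_B^{\oplus}=\textup{Id}_B$, $\pi_C^{\oplus}\iota_C^{\oplus}=\textup{Id}_C$, $\pi_C^{\oplus}\iota_B^{\oplus}=\mathbf{0}$, $\pi_B^{\oplus}\iota_C^{\oplus}=\mathbf{0}$, and $\iota_B^{\oplus}\pi_B^{\oplus}+\iota_C^{\oplus}\pi_C^{\oplus}=\textup{Id}_{B\oplus C}$, all of which are immediate block-matrix identities. Since $s(\mathcal{Y},\Bbbk)$ is preadditive (already noted in the excerpt), these relations are exactly what is needed to conclude that $(B\oplus C,\iota_B^{\oplus},\iota_C^{\oplus},\pi_B^{\oplus},\pi_C^{\oplus})$ is simultaneously a product and a coproduct of $B$ and $C$, i.e. their direct sum. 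I do not expect a genuine obstacle here: the only point requiring a little care is bookkeeping with the index sets $I_B$, $I_C$ and the possibly empty blocks — one must make sure the concatenated bands of $B\oplus C$ have the right row and column counts when $i$ or $j$ lies in only one of $I_B$, $I_C$ — but this is handled exactly as in the tables of Notation~\ref{nota:block}, and it is the mildest form of the same diagram-chasing used for $\mathbf{C}_T$.
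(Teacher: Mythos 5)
Your proposal is correct and takes essentially the same route as the paper: the paper realizes $B\oplus C$ as $\mathbf{C}_{\mathbf{0}}$ for the zero morphism $-B\to C$, so that Lemma~\ref{lem:CT} gives objecthood and the $T=\mathbf{0}$ case of Lemma~\ref{lem:morph} gives the four structure maps, and then verifies exactly the five biproduct identities you list, citing the standard biproduct criterion. The only (cosmetic) discrepancy is that in the paper's convention a morphism $B\to C$ has rows indexed like $B$ and columns like $C$ and composition is written left to right, so your column matrices are the projections and your row matrices the inclusions — your labels $\iota^{\oplus}$ and $\pi^{\oplus}$ are swapped relative to the paper's $\widehat{\iota}_B,\iota_C,\pi_B,\widehat{\pi}_C$ — but the matrix identities you check coincide with the paper's, so the argument is unaffected.
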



\begin{proof}
Let $B$ and $C$ be objects in $s(\mathcal{Y}, \Bbbk)$. Denote $\mathbf{C}_{\mathbf{0}_{-B}^C}=\left(\begin{array}{cc}
    B^j_i & (\mathbf{0}_B^C)_i^j  \\
    (\mathbf{0}_C^B)_i^j & C^j_i
    \end{array}\right)_{i,j \in \mathcal{Y}}$ by $\mathbf{C}_{\mathbf{0}}$, which is an object in $s(\mathcal{Y}, \Bbbk)$ by Lemma~\ref{lem:CT}. Under the notations of Lemma~\ref{lem:morph}, $\mathbf{C}_{\mathbf{0}}$ with inclusion $\widehat{\iota}_B$, $\iota_C$, and projections $\pi_B$ and $\widehat{\pi}_C$ is the direct sum of $B$ and $C$ in $s(\mathcal{Y}, \Bbbk)$.

By \cite[Lemma 5.87, p. 304]{Rot09} it is sufficient to show 
	$$
	\widehat{\iota}_B\widehat{\pi}_C=\mathbf{0}_B^C,\ \ \  \iota_C\pi_B=\mathbf{0}_C^B,\ \ \  \widehat{\iota}_B\pi_B=\textup{Id}_B,\ \ \  \iota_C\widehat{\pi}_C=\textup{Id}_C\ \ \ \text{and }\ \  \pi_B\widehat{\iota}_B+\widehat{\pi}_C\iota_C=\textup{Id}_{\mathbf{C}_{\mathbf{0}}},$$
which is straightforward by multiplication of matrices.

\end{proof}

\begin{corollary}
$s(\mathcal{Y},\Bbbk)$ is an additive $\Bbbk$-category.
\end{corollary}


For a better exposition, in Remark~ \ref{rem:nonabel} and Example~\ref{ex:triang}, we will use color blue to identify the indices of a matrix block.


\begin{remark}
\label{rem:nonabel}
It is important to note that $s(\mathcal{Y},\Bbbk)$ is not an abelian category. For instance, consider a poset $\mathcal{Y}$ with at most three elements, $\alpha_1<\alpha_2<\alpha_3$, and an involution $\sigma$ such that $\sigma(\alpha_3)=\alpha_3$ and
    $$
    \sigma(\alpha_i)= \alpha_i\ (i=1,2)\ \ \text{or }\ \sigma(\alpha_1)=\alpha_2.
    $$
To obtain a contradiction, suppose that $s(\mathcal{Y},\Bbbk)$ is an abelian category. Is it easy to check that, the kernel and cokernel of the following morphism 
$$
T=\left(\begin{array}{c||c}
     & {\blue \alpha_3}\\
     \hline\hline
    {\blue \alpha_1} & 0\\\hline
    {\blue \alpha_2} & 1
\end{array}\right):\left(\begin{array}{c||c|c}
     &{\blue \alpha_1} & {\blue \alpha_2}\\
     \hline\hline
    {\blue \alpha_1} &0 &0\\\hline
    {\blue \alpha_2} &0&0
\end{array}\right)\longrightarrow \left(\begin{array}{c||c}
     & {\blue \alpha_3}\\
     \hline\hline
    {\blue \alpha_3} & 0
\end{array}\right)$$
are  $\ker(T)=\left(\begin{array}{c||c}
     & {\blue \alpha_1}\\
     \hline\hline
    {\blue \alpha_1} & 0
\end{array}\right)$ with $\left(\begin{array}{c||c|c}
     & {\blue \alpha_1} & {\blue \alpha_2}\\
     \hline\hline
    {\blue \alpha_1} & 1 & 0
\end{array}\right):\ker(T)\longrightarrow\left(\begin{array}{c||c|c}
     &{\blue \alpha_1} & {\blue \alpha_2}\\
     \hline\hline
    {\blue \alpha_1} &0 &0\\\hline
    {\blue \alpha_2} &0&0
\end{array}\right)$ its inclusion and $\textup{coker}(T)=\mathbb{O}$. Since such inclusion is split mono with inverse $\left(\begin{array}{c||c}
     &{\blue \alpha_1} \\
     \hline\hline
    {\blue \alpha_1} & 1\\\hline
    {\blue \alpha_2} & 0
\end{array}\right)$, so $T$ is split epi. This contradicts the fact that there is not a non-null morphism in $s(\mathcal{Y},\Bbbk)$ of the form $
\left(\begin{array}{c||c|c}
     & {\blue \alpha_1} & {\blue \alpha_2} \\
     \hline\hline
    {\blue \alpha_3} & 
\end{array}\right)
$.

\end{remark}


Since our purpose is to show that a certain quotient of the category $s(\mathcal{Y},\Bbbk)$ is triangulated, it is useful to define the notion of \emph{standard triangle} in this context, which we define to be sequence of the form 
	$$
    \xymatrix{
    B\ar[r]^-{T} &
    C\ar[r]^-{\iota_B} &
    \mathbf{C}_{T}\ar[r]^-{\pi_{B}} &
    [B]},
    $$
where $\iota_C$ and $\pi_B$ are the inclusion and projection defined in Lemma~\ref{lem:morph}, and defined autofunctor  
\begin{equation*}
\label{[-]}
     [-]:s(\mathcal{Y},\Bbbk)\longrightarrow s(\mathcal{Y},\Bbbk)
\end{equation*}
given by $[B]=-B$ for objects and $[T]=T$ for morphisms. We will finish this section with one example of standard triangle. 


\begin{example}
\label{ex:triang}
Consider $\mathcal{Y}=
    (\mathbb{ Q},<)$ with involution $\sigma(x)=\frac{1}{x}$, 
    if $x\not=0$ and $\sigma(x)=0$, if $x=0$. Clearly, there exists objects in $s(\mathcal{Y},\Bbbk)$ of the form
    $$
   B= {\small{\left(\begin{array}{c||c|c}
     &{\blue 0} & {\blue 1}\\
     \hline\hline
    {\blue 0} & B_0^0 & B_0^1 \\\hline
    {\blue 1} & B_1^0 & B_1^1
\end{array}\right)}} \ \ \ \ \ \text{and } \ \ \ C={\small{\left(\begin{array}{c||c|c|c}
     & {\blue 0} & {\blue 1/2} & {\blue 2}\\
     \hline\hline
    {\blue 0} & C_0^0 & C_0^{1/2} & C_0^{2}\\\hline
    {\blue 1/2} & C_{1/2}^0 & C_{1/2}^{1/2} & C_{1/2}^2 \\\hline
    {\blue 2} & C_{2}^{0} & C_{2}^{1/2} & C_{2}^{2}
\end{array}\right)}}
     $$ 
     
Note that, any morphism $T:B\longrightarrow C$ in $s(\mathcal{Y},\Bbbk)$ and its $\mathbf{C}_T$ have the form 
	$$
	T={\small{\left(\begin{array}{c||c|c|c}
     &{\blue 0}&{\blue 1/2} & {\blue 2}\\
     \hline\hline
    {\blue 0} &T^0_0 &T^{1/2}_0&T^2_0\\\hline
    {\blue 1} &0&0&T^2_1
 \end{array}\right)} }\ 
\ \ \ \text{and} \ \ \
   \mathbf{C}_T={\small{\left(\begin{array}{c||cc|c|c|c}
     & {\blue 0}&&{\blue 1/2}& {\blue 1} & {\blue 2}\\
     \hline\hline
    {\blue 0} &-B^0_0 &T^0_0&T^{1/2}_0&-B^1_0&T^{2}_0\\
            & 0 &C^0_0&C^{{1}/{2}}_0&0&C^2_0\\\hline
      {\blue 1/2} &0&C^0_{1/2}&C^{1/2}_{1/2}&0 &0\\\hline
    {\blue 1} &-B^0_1&0&0&-B^1_1 &T^2_1\\\hline
       {\blue 2} &0&C^0_2&0&0 &C^2_2
\end{array}\right)}}.
    $$
 Therefore, the standard triangle is 
 
	$$
	\xymatrix{B\ar[rrrr]^{\small{\left(\begin{array}{c||c|c|c}
     &{\blue 0}&{\blue 1/2} & {\blue 2}\\
     \hline\hline
    {\blue 0} &T^0_0 &T^{1/2}_0&T^2_0\\\hline
    {\blue 1} &0&0&T^2_1
\end{array}\right)}}&&&&C\ar[rrrr]^{\small{\left(\begin{array}{c||cc|c|c|c}
     & {\blue 0}&&{\blue 1/2}& {\blue 1} & {\blue 2}\\
     \hline\hline
    {\blue 0} &0&1&0&0&0\\
        \hline
      {\blue 1/2} &0&0& 1&0 &0\\\hline
       {\blue 2} &0&0&0&0 &1
\end{array}\right)}}&&&&\mathbf{C}_T\ar[rrr]^{\small{\left(\begin{array}{c||c|c}
     & {\blue 0}& {\blue 1} \\
     \hline\hline
    {\blue 0} &1 &0\\
            & 0 &0\\\hline
      {\blue 1/2} &0&0 \\\hline
    {\blue 1} &0&1\\\hline
       {\blue 2} &0 &0
\end{array}\right)}}&&&-B.}$$ 
\end{example}


\section{Triangulated structure}
\label{sec:triang}


Let $S,T\in \textup{Hom}_{s(\mathcal{Y},\Bbbk)}(B,C)$ be morphisms. We proceed to define the equivalence relation $\equiv$ on the set  $\textup{Hom}_{s(\mathcal{Y},\Bbbk)}(B,C)$  by $S\equiv T$ if and only if there exists a matrix $K$ (we will call $\kappa$-\emph{matrix}) satisfying:
\begin{enumerate}[(i)]
     \item The horizontal (resp. vertical) partition of $K$ is compatible with the vertical (resp. horizontal) partition of $B$ (resp. $C$).
     \item $S-T=BK+KC$.
     \item If $i>j$, then $K^j_i=0$ for all $i,j \in \mathcal{Y}$.
     \item If $\sigma(i)=j$, then $K_i^i=K_j^j$.
\end{enumerate}

An important point to note is the fact that, in general, $K$ is not necessarily a morphism in $s(\mathcal{Y},\Bbbk)$ by item (ii).


\begin{remark}
\label{rem:TR1-2}
Note that, $\textup{Id}_{\mathbf{C}_{\textup{Id}_B}}\equiv \mathbf{0}$ with $\kappa$-matrix $K=\left(\begin{array}{cc}
    (\mathbf{0}_B^B)_i^j & (\mathbf{0}_B^B)_i^j  \\
    (\textup{Id}_B)^j_i & (\mathbf{0}_B^B)_i^j
\end{array}\right)_{i,j \in \mathcal{Y}}$. Morever, for any standard triangle 
    $
    \xymatrix{
    B\ar[r]^-{T} &
    C\ar[r]^-{\iota_C} &
    \mathbf{C}_{T}\ar[r]^-{\pi_{B}} &
    [B]}
    $
we have $\iota_C\pi_B=\mathbf{0}$ and $T\iota_C\neq \mathbf{0}$. However, $T\iota_C\equiv \mathbf{0}$ with $\kappa$-matrix $K=\left(\begin{array}{cc}
    (\textup{Id}_{B})^j_i& (\mathbf{0}_B^C)_i^j  \\
\end{array}\right)_{i,j \in \mathcal{Y}}$. 
    
\end{remark}


Note that morphisms equivalent to zero form a two-sided ideal under composition. Thus, we can define the quotient category $\kappa(\mathcal{Y},\Bbbk)$, to be the category with the same objects as $s(\mathcal{Y},\Bbbk)$ and the group of morphisms is given by $ \textup{Hom}_{\kappa(\mathcal{Y},\Bbbk)}(B,C) = {\textup{Hom}_{s(\mathcal{Y},\Bbbk)}(B ,C)}/\equiv$, for each $B,C\in\kappa(\mathcal{Y},\Bbbk)$. This category is an additive $\Bbbk$-category, and it is not abelian, to see this, it is enough to adapt the Remark~\ref{rem:nonabel}.

Additionally, note that the definitions in this paper on $s(\mathcal{Y},\Bbbk)$ are compatible with the equivalence relation $\equiv$, so we have a well-defined induced autofunctor $[-]:\kappa(\mathcal{Y},\Bbbk)\longrightarrow \kappa(\mathcal{Y},\Bbbk)$. The category $\kappa(\mathcal{Y},\Bbbk)$ is the previously  mentioned quotient of $s(\mathcal{Y},\Bbbk)$ which we want to give the triangulated structure. See \cite[Section 12.3, pp. 303--309]{DW17} or \cite[Section 1.1, pp. 1--9]{Hap88} for the definition and properties of Triangulated category. 

The next step for getting a triangulated structure on $\kappa(\mathcal{Y},\Bbbk)$ is to find a suitable set {\huge{$\tau$}} of \emph{distinguished triangle}, which we define to be triangles of the form 
	$
    \xymatrix{
    X\ar[r]^-{u} &
    Y\ar[r]^-{v} &
    Z\ar[r]^-{w} &
    [X]}
    $
in $\kappa(\mathcal{Y},\Bbbk)$ which are isomorphic to a standard triangle in $\kappa(\mathcal{Y},\Bbbk)$. In others words, there exists an isomophism of triangles in $\kappa(\mathcal{Y},\Bbbk)$
    $$
    \xymatrix{
    X\ar[r]^-{u}\ar[d]_-{\cong} &
    Y\ar[r]^-{v}\ar[d]_-{\cong} &
    Z\ar[r]^-{w}\ar[d]_-{\cong} &
    [X]\ar[d]_-{\cong}\\
    B\ar[r]_-{T} &
    C\ar[r]_-{\iota_C} &
    \mathbf{C}_T\ar[r]_-{\pi_B} &
    [B]}
    $$
for some morphism $T:B\longrightarrow C$ in $s(\mathcal{Y},\Bbbk)$. Denote the family of distinguished triangles by {\huge{$\tau$}}.

For a better exposition, we will denote
the triangle $
    \xymatrix{
    X\ar[r]^-{u} &
    Y\ar[r]^-{v} &
    Z\ar[r]^-{w} &
    [X]}
    $ 
by the sextuple $(X,Y,Z,u,v,w)$.

To show the rotation property for distinguished triangles, we need the following technical lemma.


\begin{lemma}\label{tr2:lemma}
    Let $T\in\textup{Hom}_{s(\mathcal{Y},\Bbbk)}(B,C)$ be a morphism. Then, 
    $$
    R=\left(
\begin{array}{ccc}
    -T^j_i & (\textup{Id}_B)^j_i& (\mathbf{0}^{C}_B)^j_i \\
\end{array}
 \right)_{i,j\in \mathcal{Y}}\in\textup{Hom}_{s(\mathcal{Y},\Bbbk)}([B],\mathbf{C}_{\iota_C})
 	$$
 	and
 	$$
 	S=\left(
\begin{array}{cc}
    (\mathbf{0}_C^B)^j_i\\ 
    (\textup{Id}_B)^j_i\\
    (\mathbf{0}_C^B)^j_i
\end{array}
 \right)_{i,j\in \mathcal{Y}}\in\textup{Hom}_{s(\mathcal{Y},\Bbbk)}(\mathbf{C}_{\iota_C},[B]).$$
\end{lemma}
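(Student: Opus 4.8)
The plan is to verify directly that $R$ and $S$ satisfy the four defining conditions (a)--(d) of a morphism in $s(\mathcal{Y},\Bbbk)$, exactly as was done for $\iota_C$ and $\pi_B$ in the proof of Lemma~\ref{lem:morph}. The only condition requiring real computation is (b), the intertwining relation; conditions (a), (c) and (d) are immediate from the block shapes, since $R$ is built from $-T$, $\textup{Id}_B$, $\mathbf{0}$ (all upper triangular and all respecting the involution because $T$ does and the identity trivially does) and similarly for $S$. So the heart of the proof is to check
\[
[B]\,R = R\,\mathbf{C}_{\iota_C} \qquad\text{and}\qquad \mathbf{C}_{\iota_C}\,S = S\,[B].
\]

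First I would write out $\mathbf{C}_{\iota_C}$ explicitly. By Notation~\ref{nota:block} applied to the morphism $\iota_C\in\textup{Hom}(C,\mathbf{C}_T)$, its $(i,j)$ block is
\[
\left(\mathbf{C}_{\iota_C}\right)_i^j=\begin{pmatrix} -C_i^j & (\iota_C)_i^j\\ \mathbf{0} & (\mathbf{C}_T)_i^j\end{pmatrix}
=\begin{pmatrix} -C_i^j & \mathbf{0} & \textup{Id}_C\\ \mathbf{0} & -B_i^j & T_i^j\\ \mathbf{0} & \mathbf{0} & C_i^j\end{pmatrix},
\]
a $3\times 3$ block pattern (with the convention that the middle and bottom strips come from $\mathbf{C}_T$). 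With this in hand, the computation $[B]R = (-B)R$ has $(i,j)$ block $\sum_k (-B_i^k)\big(-T_k^j,\ \textup{Id}_k^j,\ \mathbf{0}\big) = \big(\sum_k B_i^k T_k^j,\ -B_i^j,\ \mathbf{0}\big)$, while $R\,\mathbf{C}_{\iota_C}$ has $(i,j)$ block obtained by multiplying the row $(-T_i^k,\ \textup{Id}_i^k,\ \mathbf{0})$ against the three block-columns of $\mathbf{C}_{\iota_C}$; the first coordinate gives $\sum_k T_i^k C_k^j$, the second $-B_i^j$, the third $-\sum_k T_i^k\,\textup{Id} + \sum_k \textup{Id}\,T_k^j$ — and all three match once we use $TC = BT$ (condition (b) for $T$). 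The verification for $S$ is the same kind of bookkeeping: $\mathbf{C}_{\iota_C}S$ picks out the middle block-column of $\mathbf{C}_{\iota_C}$, giving $\big(\mathbf{0},\ -B_i^j,\ \mathbf{0}\big)$ in the three strips, which equals $S\,[B]$ whose $(i,j)$ block is $\big(\mathbf{0},\ -B_i^j,\ \mathbf{0}\big)$ as well; here one only needs $B^2=0$ and the definition of $\mathbf{C}_T$, not even the relation $TC=BT$.

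**Main obstacle.**

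The genuine difficulty is purely organizational rather than mathematical: keeping the nested block structure straight. Because $\mathbf{C}_{\iota_C}$ is itself a cone of a map into a cone, each of its entries is a $3\times 3$ array of blocks, and one must be careful that the row/column partitions of $R$ and $S$ are the ones induced by $\mathbf{C}_T = [B]\text{-by-}C$ sitting inside $\mathbf{C}_{\iota_C} = C\text{-by-}\mathbf{C}_T$, so that the matrix products are legal by blocks (condition (a)). I would therefore be explicit about which strip is which, and would lean on the already-established Lemma~\ref{lem:CT} to know $\mathbf{C}_{\iota_C}$ is a bona fide object; then conditions (c) and (d) for $R$ and $S$ follow from the corresponding properties of $T$ and the identity, and only the intertwining identity (b) needs the displayed computation, which reduces to $TC=BT$ and $B^2=C^2=0$ exactly as in the proof of Lemma~\ref{lem:CT}.
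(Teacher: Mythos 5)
Your proposal is correct and follows essentially the same route as the paper: conditions (a), (c), (d) are inherited from $T$ and $\textup{Id}_B$, and the only real work is the intertwining check $[B]R=R\,\mathbf{C}_{\iota_C}$ (resp.\ $\mathbf{C}_{\iota_C}S=S[B]$), carried out blockwise against the explicit $3\times 3$ block form of $\mathbf{C}_{\iota_C}$ and reduced to $TC=BT$, exactly as in the paper (which writes out $R$ and declares $S$ analogous). Your only slight imprecision is claiming the $S$ computation needs $B^2=0$; in fact it needs nothing beyond the block shape of $\mathbf{C}_{\iota_C}$, which is harmless.
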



\begin{proof}
We will only show that $R\in\textup{Hom}_{s(\mathcal{Y},\Bbbk)}([B],\mathbf{C}_{\iota_C})$, because for $S$ the proof is analogous. 
 
The properties (a), (c) and (d) of definition of morphism in $s(\mathcal{Y},\Bbbk)$,  follows from the fact that $T$ and $\textup{Id}_B$ are morphisms in $s(\mathcal{Y},\Bbbk)$. The equality $[B]R=R\mathbf{C}_{\iota_C}$ is consequence of the following computations 
{\footnotesize{\begin{align*}
    (-BR)_i^j&=\sum_{k\in \mathcal{Y}} -B^k_i\left(
\begin{array}{ccc}
    -T^j_k & (\textup{Id}_B)^j_k& (\mathbf{0}_B^C)^j_k \\
\end{array}
 \right)  \\
 & = \left(
\begin{array}{ccc}
    (BT)^j_i & -B^j_i & (\mathbf{0}_B^C)^j_i \\
\end{array}
 \right) = \left(
\begin{array}{ccc}
    (TC)^j_i& -B^j_i & (\mathbf{0}_B^C)^j_i \\
\end{array}
 \right)\\
 &=\sum_{k\in \mathcal{Y}} \left(
\begin{array}{ccc}
    -T^k_i & (\textup{Id}_B)^k_i& (\mathbf{0}^{C}_B)^k_i \\
\end{array}
 \right)\left(\begin{array}{cccc}
  -C^j_k   & (\mathbf{0}_{C}^B)^j_k &  (\textup{Id}_C)^j_k \\ 
(\mathbf{0}_{B}^C)^j_k      & -B^j_k & T^j_k \\
    (\mathbf{0}_{C}^C)^j_k  & (\mathbf{0}_{C}^B)^j_k & C^j_k    
\end{array}\right)\\ 
    & = (R\mathbf{C}_{\iota_C})_i^j,
\end{align*}}}
for all $i,j \in \mathcal{Y}$.

\end{proof}


\begin{proposition}
\label{prop:TR2}
If $(X,Y,Z,u,v,w)$ is a distinguished triangle, then the triangle $(Y,Z,[X],v,w,-[u])$ is distinguished.
\end{proposition}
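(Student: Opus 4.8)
The statement is the rotation axiom (TR2) for triangulated categories, so the strategy is the standard one: reduce to the case of a standard triangle, and then exhibit an explicit isomorphism (in $\kappa(\mathcal{Y},\Bbbk)$) between the rotated triangle and the standard triangle built from $\iota_C$. First I would observe that, since $(X,Y,Z,u,v,w)$ is distinguished, it is isomorphic in $\kappa(\mathcal{Y},\Bbbk)$ to a standard triangle $(B,C,\mathbf{C}_T,T,\iota_C,\pi_B)$ for some $T\in\textup{Hom}_{s(\mathcal{Y},\Bbbk)}(B,C)$; applying $[-]$ and chasing the isomorphism squares, the rotated triangle $(Y,Z,[X],v,w,-[u])$ is isomorphic in $\kappa(\mathcal{Y},\Bbbk)$ to $(C,\mathbf{C}_T,[B],\iota_C,\pi_B,-[T])$. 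Hence it suffices to prove that this last triangle is distinguished, i.e. isomorphic in $\kappa(\mathcal{Y},\Bbbk)$ to the standard triangle on the morphism $\iota_C$, namely $(C,\mathbf{C}_T,\mathbf{C}_{\iota_C},\iota_C,\iota_{\mathbf{C}_T},\pi_C)$, where I write $\iota_{\mathbf{C}_T}\colon \mathbf{C}_T\to\mathbf{C}_{\iota_C}$ and $\pi_C\colon\mathbf{C}_{\iota_C}\to[C]$ for the canonical inclusion and projection of Lemma~\ref{lem:morph} attached to the pair $(C,\mathbf{C}_T)$.

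\textbf{Building the isomorphism.} The comparison of triangles will be the identity on the first two terms $C$ and $\mathbf{C}_T$, and on the third term it must be an isomorphism $\mathbf{C}_{\iota_C}\to[B]=-B$ in $\kappa(\mathcal{Y},\Bbbk)$. This is exactly where Lemma~\ref{tr2:lemma} enters: the maps $R\colon[B]\to\mathbf{C}_{\iota_C}$ and $S\colon\mathbf{C}_{\iota_C}\to[B]$ produced there are the candidate mutually inverse isomorphisms. So the key steps are:
\begin{enumerate}[(1)]
\item Check $SR=\textup{Id}_{[B]}$ on the nose in $s(\mathcal{Y},\Bbbk)$ — a one-line block computation using the shapes of $R$ and $S$.
\item Check $RS\equiv\textup{Id}_{\mathbf{C}_{\iota_C}}$ in $\kappa(\mathcal{Y},\Bbbk)$; here $RS$ will differ from the identity by a term supported off the block structure, and one must write down an explicit $\kappa$-matrix $K$ with $RS-\textup{Id}=\mathbf{C}_{\iota_C}K+K\mathbf{C}_{\iota_C}$, much as in Remark~\ref{rem:TR1-2} where $\textup{Id}_{\mathbf{C}_{\textup{Id}_B}}\equiv\mathbf{0}$.
\item Verify commutativity of the three squares of the comparison diagram, up to $\equiv$:
\[
\xymatrix{
C\ar[r]^-{\iota_C}\ar@{=}[d] & \mathbf{C}_T\ar[r]^-{\pi_B}\ar@{=}[d] & [B]\ar[r]^-{-[T]}\ar[d]^-{R}_-{\cong} & [C]\ar@{=}[d]\\
C\ar[r]_-{\iota_C} & \mathbf{C}_T\ar[r]_-{\iota_{\mathbf{C}_T}} & \mathbf{C}_{\iota_C}\ar[r]_-{\pi_C} & [C].
}
\]
The left square is trivial; the middle square $\iota_{\mathbf{C}_T}=R\,\pi_B$ and the right square $\pi_C\,R=-[T]$ (equivalently $S$ composed appropriately recovers $\pi_B$ and $-[T]$) are again short block multiplications, possibly needing a $\kappa$-matrix correction as in Remark~\ref{rem:TR1-2}.
\end{enumerate}

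\textbf{Expected main obstacle.} The genuinely delicate point is step (2): unlike $SR$, the composite $RS$ is not literally the identity of $\mathbf{C}_{\iota_C}$ — it has the ``wrong'' block in the slot coming from the $[C]$-summand — and the whole reason the rotated triangle is distinguished only in the \emph{quotient} $\kappa(\mathcal{Y},\Bbbk)$ is that this discrepancy is killed by a $\kappa$-matrix rather than vanishing outright. Writing that $K$ down correctly, and likewise checking the middle/right squares only commute up to $\equiv$ (not on the nose), is the crux; it is the analogue of the phenomenon already flagged in Remark~\ref{rem:TR1-2} that $T\iota_C\equiv\mathbf{0}$ but $T\iota_C\neq\mathbf{0}$. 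Everything else is bookkeeping with the explicit $2\times 3$ and $3\times 1$ block shapes of $R$ and $S$ against the $3\times 3$ block form of $\mathbf{C}_{\iota_C}$, and reduces to the identities $B^2=0$, $TC=BT$ already exploited in Lemmas~\ref{lem:CT} and~\ref{tr2:lemma}.
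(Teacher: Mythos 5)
Your proposal is correct and takes essentially the same route as the paper: reduce to a standard triangle and compare the rotated triangle with the standard triangle on $\iota_C$ via the mutually inverse (in $\kappa(\mathcal{Y},\Bbbk)$) morphisms $R$ and $S$ of Lemma~\ref{tr2:lemma}, with explicit $\kappa$-matrices supplying exactly the corrections you anticipate in steps (2)--(3). The only cosmetic difference is that the paper orients the comparison along $S\colon\mathbf{C}_{\iota_C}\to[B]$ (so its middle square commutes on the nose and the third one needs a $\kappa$-matrix), whereas you use $R$; both directions work.
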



\begin{proof}

Since the rotation property is compatible with isomorphisms of triangles, it is enough to prove  for a standard triangle $
    \xymatrix{
    B\ar[r]^-{T} &
    C\ar[r]^-{\iota_C} &
    \mathbf{C}_T\ar[r]^-{\pi_B} &
    [B]}
    $. In other words, we shall prove that
    $
    \xymatrix{
    C\ar[r]^-{\iota_C} &
    \mathbf{C}_T\ar[r]^-{\pi_B} &
    [B]\ar[r]^-{-[T]} &
    [C]}
    $ is a distinguished triangle. 
    
Consider the following diagram
    $$
    {\footnotesize{
    \xymatrix{
    C\ar[r]^{\iota_C}\ar[d]_{\textup{Id}_C} & \mathbf{C}_T\ar[r]^-{\iota_{\mathbf{C}_T}}\ar[d]_{\textup{Id}_{\mathbf{C}_T}} & \mathbf{C}_{\iota_C}\ar[r]^{\pi_C}\ar[d]_{S}   & [C]\ar[d]^{\textup{Id}_C}\\
    C\ar[r]_{\iota_C} & \mathbf{C}_T\ar[r]_-{\pi_B} & [B]\ar[r]_{-[T]}   & [C]}}}
    $$
where $S$ is the morphism given in Lemma \ref{tr2:lemma}. The commutativity in $\kappa(\mathcal{Y},\Bbbk)$ of the latter diagram follows to the fact that $\iota_{\mathbf{C}_T}S=\pi_B$ and $\pi_C+ST=\mathbf{C}_{\iota_C}K-KC$, where 
    $$
    \footnotesize{K=\left(
\begin{array}{cc}
     (\mathbf{0}_{C}^{C})^j_i \\
       (\mathbf{0}_{B}^{C})^j_i \\
       (\textup{Id}_C)^j_i
\end{array}
\right)_{i,j\in \mathcal{Y}}}.
    $$

To show that $S$ is an isomorphism in $\kappa(\mathcal{Y},\Bbbk)$, it is enough to consider the morphism $R$ introduced in Lemma~\ref{tr2:lemma}, and then note that $RS=\textup{Id}_{B}$ and
$\textup{Id}_{\mathbf{C}_{\iota_C}} -SR=K\mathbf{C}_{\iota_C}+\mathbf{C}_{\iota_C}K$, where 
$$
\footnotesize{K=\left(\begin{array}{cccc}
 (\mathbf{0}_{C}^C)^j_i   & (\mathbf{0}_{C}^B)^j_i &   (\mathbf{0}_C^C)^j_i\\ 
 (\mathbf{0}_{B}^C)^j_i     & (\mathbf{0}_{B}^B)^j_i & (\mathbf{0}^{C}_B)^j_i \\
    (\textup{Id}_C)^j_i  & (\mathbf{0}_{C}^B)^j_i & (\mathbf{0}_{C}^C)^j_i  
\end{array}\right)_{i,j\in \mathcal{Y}}}.
$$
\end{proof}


Remark~\ref{rem:TR1-2}, Proposition~\ref{prop:TR2} and the following proposition allow us to guarantee that $\kappa(\mathcal{Y},\Bbbk)$ is a pretriangulated category.

\begin{proposition}
\label{prop:TR3}
If $(X,Y,Z,u,v,w)$ and  $(X',Y',Z',u',v',w')$ are distinguished triangles, then for any pair $f\in \textup{Hom}_{\kappa(\mathcal{Y},\Bbbk)}(X,X')$ and  $g\in\textup{Hom}_{\kappa(\mathcal{Y},\Bbbk)}(Y,Y')$ of morphisms such that $fu'=ug$, there exists a morphism $h\in\textup{Hom}_{\kappa(\mathcal{Y},\Bbbk)}(Z,Z')$ such that the following diagram commutes in  $\kappa(\mathcal{Y},\Bbbk)$
    $$
    \xymatrix{X \ar[d]_{ f}\ar[r]^{u} &{Y}\ar[d]_{g}\ar[r]^{v} & Z\ar@{-->}[d]_{\exists h}\ar[r]^{ w} &[X]\ar[d]^{[f]} \\
    X'\ar[r]_{u'} & Y'\ar[r]_{v'} & Z'\ar[r]_{w'} & [X']}
    $$
\end{proposition}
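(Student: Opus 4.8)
The plan is to reduce, exactly as in Proposition~\ref{prop:TR2}, to the case where both distinguished triangles are already standard triangles. That is, by composing $f$ and $g$ with the vertical isomorphisms of triangles witnessing that $(X,Y,Z,u,v,w)$ and $(X',Y',Z',u',v',w')$ are distinguished, and using that the fill-in property is clearly preserved under isomorphism of triangles, I may assume we are given a standard triangle $B\xrightarrow{T}C\xrightarrow{\iota_C}\mathbf{C}_T\xrightarrow{\pi_B}[B]$ and a standard triangle $B'\xrightarrow{T'}C'\xrightarrow{\iota_{C'}}\mathbf{C}_{T'}\xrightarrow{\pi_{B'}}[B']$, together with morphisms $f\colon B\to B'$ and $g\colon C\to C'$ in $s(\mathcal{Y},\Bbbk)$ (lifted from $\kappa(\mathcal{Y},\Bbbk)$) with $fT'\equiv Tg$, i.e.\ $fT'-Tg=B\kappa+\kappa C'$ for some $\kappa$-matrix $\kappa$. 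I then need to produce $h\colon\mathbf{C}_T\to\mathbf{C}_{T'}$ with $g\iota_{C'}\equiv\iota_C h$ and $h\pi_{B'}\equiv\pi_B [f]$ in $\kappa(\mathcal{Y},\Bbbk)$.

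The key step is to write down the natural candidate for $h$ in block form, in the style of Notation~\ref{nota:block}: on each $(i,j)$ block set
$$
h^j_i=\left(\begin{array}{cc}
f^j_i & \kappa^j_i\\
(\mathbf{0}^{B'}_{C})^j_i & g^j_i
\end{array}\right),
$$
where $\kappa$ is the $\kappa$-matrix above (this is where the hypothesis $fT'-Tg=B\kappa+\kappa C'$ enters, and it is exactly the condition needed for $h$ to satisfy condition~(b), $h\,\mathbf{C}_{T'}=\mathbf{C}_T\,h$). First I would verify that $h\in\textup{Hom}_{s(\mathcal{Y},\Bbbk)}(\mathbf{C}_T,\mathbf{C}_{T'})$: conditions~(a), (c), (d) follow because $f$, $g$ are morphisms and $\kappa$ satisfies the corresponding triangularity/involution conditions~(iii)--(iv) of the definition of $\equiv$; condition~(b) is the block computation
$$
\mathbf{C}_T\,h=\left(\begin{array}{cc} -B f & -B\kappa+Tg\\ 0 & Cg\end{array}\right),\qquad
h\,\mathbf{C}_{T'}=\left(\begin{array}{cc} -fB' & fT'+\kappa C'\\ 0 & gC'\end{array}\right),
$$
and these agree block-by-block precisely because $fB'=Bf$, $gC'=Cg$, and $fT'-Tg=B\kappa+\kappa C'$.

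It then remains to check the two commutativity squares \emph{in $\kappa(\mathcal{Y},\Bbbk)$}. For the left square I expect the cleaner outcome: $\iota_C h=g\iota_{C'}$ on the nose, since $\iota_C$ picks out the bottom row block $\bigl(\mathbf{0}\ \ \textup{Id}_C\bigr)$ and multiplying by $h$ yields $\bigl(\mathbf{0}\ \ g\bigr)=g\iota_{C'}$. For the right square, $h\pi_{B'}$ picks out the first column block $\bigl(\begin{smallmatrix} f\\ 0\end{smallmatrix}\bigr)$ of $h$, which should equal $\pi_B[f]=\pi_B f$ also on the nose (recall $[f]=f$); if instead a genuine $\kappa$-matrix correction is needed, I would exhibit it explicitly, as was done repeatedly in the proof of Proposition~\ref{prop:TR2} (e.g.\ a matrix supported in one block built from identity blocks). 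I anticipate that the only real subtlety — the ``main obstacle'' — is bookkeeping: making sure the chosen $\kappa$ from the hypothesis $fu'\equiv ug$ is inserted in the $(1,2)$ position of $h$ with the correct sign so that condition~(b) holds, and double-checking that $\kappa$'s failure to be a morphism (it only satisfies (i),(iii),(iv) of the $\equiv$-definition, not a multiplicativity condition) does not obstruct $h$ being an honest morphism — which it does not, precisely because the off-diagonal block of $\mathbf{C}_T\,h-h\,\mathbf{C}_{T'}$ is $Tg+B\kappa-fT'-\kappa C'$, and this vanishes by hypothesis. Everything else is the same kind of straightforward block multiplication used throughout Section~\ref{sec:Bond}.
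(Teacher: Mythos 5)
Your proposal matches the paper's proof essentially step for step: reduce to standard triangles, take the fill-in morphism with blocks $\left(\begin{smallmatrix} f & \kappa \\ 0 & g \end{smallmatrix}\right)$ built from a $\kappa$-matrix witnessing $fu'\equiv ug$, check it is a morphism in $s(\mathcal{Y},\Bbbk)$ via the same block computation (the sign of $\kappa$ being adjustable), and verify both squares commute strictly, as indeed they do. No substantive differences.
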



\begin{proof}

Again, it suffices to prove this proposition for standard triangles. By assumption we have a diagram
    $$
    \xymatrix{A \ar[d]_{F}\ar[r]^{T} & {B}\ar[d]_{G}\ar[r]^{ \iota_{B}} & {\mathbf{C}_T}\ar[r]^{ \pi_A} & {[A]}\ar[d]^{F} \\
    {A'}\ar[r]_{T'} & {B'}\ar[r]_{ \iota_{B'}} & {\mathbf{C}_{T'}}\ar[r]_{ \pi_{A'}} & {[A']}}
    $$
where the left square commutes in $\kappa(\mathcal{Y},\Bbbk)$. This implies that there exists a $\kappa$-matrix $K$ such that $FT'\equiv TG$. From property (i) of $\kappa$-matrix we can consider the matrix
    $$
    H = \left(\begin{array}{cc}
   F^{j}_{i}  & K^{j}_{i}  \\
    (\mathbf{0}_{B}^{A'})^j_i &  G^{j}_{i} 
\end{array}\right)_{{i},{j} \in \mathcal{Y}}.
    $$
Let us to show that $H\in\textup{Hom}_{s(\mathcal{Y},\Bbbk)}(\mathbf{C}_T,\mathbf{C}_{T'} )$. The properties (a), (c) and (d) of the definition of morphism in $s(\mathcal{Y},\Bbbk)$,  follows from the fact that $F$ and $G$ are morphisms in $s(\mathcal{Y},\Bbbk)$ and $K$ is a $\kappa$-matrix. The equality  $\mathbf{C}_TH=H\mathbf{C}_{T'}$ is consequence of $FT'-TG=AK+KB'$, $AF=FA'$ and $BG=GB'$, because
{\footnotesize{\begin{align*}
\left(\mathbf{C}_TH\right)_i^j&=
\sum_{k\in \mathcal{Y }}\left(\begin{array}{cc}
    -A^{k}_{i} & T^{k}_{i}  \\
    (\mathbf{0}^{A}_{B})^k_i &  B^{k}_{i}
\end{array}\right)\left(\begin{array}{cc}
   F^{j}_{k}  & K^{j}_{k}  \\
   (\mathbf{0}_{B}^{A'})^j_k &   G^{j}_{k}
\end{array}\right)
= \left(\begin{array}{cc}
    -(AF)^{j}_{i} & (TG)^{j}_{i}-(AK)^{j}_{i}  \\
    (\mathbf{0}^{A'}_{B})^j_i &  (BG)^{j}_{i}
\end{array}\right)
\end{align*}}}
and
{\footnotesize{\begin{align*}
(H \mathbf{C}_{T'})^j_i &= 
\sum_{k\in \mathcal{Y}}\left(\begin{array}{cc}
  F^{k}_{i}  & K^{k}_{i}  \\
    (\mathbf{0}_{B}^{A'})^k_i &  G^{k}_{i}
\end{array}\right)\left(\begin{array}{cc}
    -A'^{j}_{k} & T'^{j}_{k}  \\
    (\mathbf{0}^{A'}_{B'})^j_k &  B'^{j}_{k}
\end{array}\right)= \left(\begin{array}{cc}
    -(FA')^{j}_{i} & (FT')^{j}_{i}+(KB')^{j}_{i}  \\
  (\mathbf{0}^{A'}_{B})^j_i&  (GB')^{j}_{i}
\end{array}\right),
\end{align*}}}
 for all $i,j \in \mathcal{Y}$.

The commutativity of the following  diagram 
    $$\xymatrix{{A} \ar[d]_{ F}\ar[r]^{ T} & {B}\ar[d]_{G}\ar[r]^{ \iota_{B}} & {\mathbf{C}_T}\ar@{-->}[d]_{H}\ar[r]^{ \pi_A} &{[A]}\ar[d]^{F} \\
{A'}\ar[r]_{T'} & {B'}\ar[r]_{\iota_{B'}} & {\mathbf{C}_{T'}}\ar[r]_{\pi_{A'}} & {[A']}}$$
 follows of the fact that 
 {\footnotesize{\begin{align*}
        (G\iota_{B'})_i^j& = \sum_{k\in \mathcal{Y}}G^k_{i}\left(\begin{matrix}
          (\mathbf{0}^{A'}_{B'})^j_k & (\textup{Id}_{B'})^{j}_k
\end{matrix}\right)
= \left(\begin{matrix}
            (\mathbf{0}^{A'}_{B})^j_i  & G^{j}_{i}
\end{matrix}\right)=\sum_{k \in \mathcal{Y}}\left(\begin{matrix}
            (\mathbf{0}^{A}_{B})^k_i  & (\textup{Id}_{B})^{k}_{i}
\end{matrix}\right)\left(\begin{array}{cc}
   F^{j}_{k}  & K^{j}_{k}  \\
    (\mathbf{0}^{A'}_{B})^j_k &  G^{j}_{k}
\end{array}\right)\\
&=(\iota_{B}H)_i^j,\\
   (\pi_A F)_i^j & = \left(\sum_{k\in \mathcal{Y}}\begin{matrix}
             (\textup{Id}_{A})^k_{i}\\
              (\mathbf{0}^{A}_{B})^k_i
\end{matrix}\right)F^{j}_k = \left(\begin{matrix}
       F^{j}_i \\
              (\mathbf{0}^{A'}_{B})^j_i
\end{matrix}\right) 
    = \sum_{k \in \mathcal{Y}}\left(\begin{array}{cc}
   F^{k}_{i}  & K^{k}_{i}  \\
     (\mathbf{0}_{B}^{A'})^k_i &  G^{k}_{i}
\end{array}\right)\left(\begin{matrix}
            (\textup{Id}_{A'})^{j}_{k}\\
             (\mathbf{0}^{A'}_{B'})_k^j		
\end{matrix}\right)
    = (H\pi_{A'})_i^j.
\end{align*}}}
for all $i,j\in\mathcal{Y}$.

\end{proof}


In order to show that $\kappa(\mathcal{Y},\Bbbk)$ is a triangulated category, it remains to show the octahedral axiom.


\begin{proposition}
\label{prop:Octae}
   For any $(X, Y, X', u, u', v')$, $(Y, Z, Z', v,w, w')$ and $(X, Z, Y', uv, p,q)$ distinguished triangles, there exists a distinguished triangle $(X',Y',Z',f,g,w'u')$  making the following diagram commutative in  $\kappa(\mathcal{Y},\Bbbk)$
    
    $$
    {\footnotesize{\xymatrix{
    X\ar[rr]^-{u}\ar[d]_-{\textup{Id}_X}   &&Y\ar[rr]^-{u'}\ar[d]_-{v}    &&X'\ar@{-->}[d]_-{f}\ar[rr]^<<<<<{v'} &    &[X]\ar[d]^-{\textup{Id}_X}\\
X\ar[rr]^-{uv}\ar[d]_-{u}  & &Z\ar[rr]^-{p}\ar[d]_{\textup{Id}_Z}    &&Y'{ \ar@{-->}[d]_-{g}}\ar[rr]^-{q}    & &[X]\ar[d]^-{u} \\   
Y\ar[rr]^-{v}  & &Z\ar[rr]^-{w}    &&Z'\ar[d]_-{w'u'}\ar[rr]^-{w'}    & &[Y]\ar[lld]^-{u'} \\ 
&& && [X']&&}}}
    $$
\end{proposition}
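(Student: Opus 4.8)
The plan is to proceed exactly as in the proofs of Proposition~\ref{prop:TR2} and Proposition~\ref{prop:TR3}: since the octahedral axiom is compatible with isomorphisms of triangles, one first transports the three given distinguished triangles to standard ones. After replacing the first and second triangles by the standard triangles attached to morphisms $u\colon A\to B$ and $v\colon B\to C$ of $s(\mathcal{Y},\Bbbk)$ (chosen so that their common vertex is $B$), the base morphism of the third triangle becomes the composite $uv\colon A\to C$, and we may also take that one to be the standard triangle on $uv$. So it suffices to treat
$$
X=A,\ \ Y=B,\ \ Z=C,\ \ X'=\mathbf{C}_u,\ \ Y'=\mathbf{C}_{uv},\ \ Z'=\mathbf{C}_v,
$$
with $u'=\iota_B$, $v'=\pi_A$, $w=\iota_C$, $w'=\pi_B$, $p=\iota_C$, $q=\pi_A$, so that $w'u'=\pi_B\iota_B\colon\mathbf{C}_v\to[\mathbf{C}_u]$; we must exhibit $f\colon\mathbf{C}_u\to\mathbf{C}_{uv}$ and $g\colon\mathbf{C}_{uv}\to\mathbf{C}_v$ for which $(\mathbf{C}_u,\mathbf{C}_{uv},\mathbf{C}_v,f,g,\pi_B\iota_B)$ is distinguished and the octahedron commutes.

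For the morphisms I would take the ``obvious'' block maps
$$
f=\left(\begin{matrix}(\textup{Id}_A)^j_i & (\mathbf{0}_A^C)^j_i\\ (\mathbf{0}_B^A)^j_i & v^j_i\end{matrix}\right)_{i,j\in\mathcal{Y}}\colon\mathbf{C}_u\to\mathbf{C}_{uv},
\qquad
g=\left(\begin{matrix}u^j_i & (\mathbf{0}_A^C)^j_i\\ (\mathbf{0}_C^B)^j_i & (\textup{Id}_C)^j_i\end{matrix}\right)_{i,j\in\mathcal{Y}}\colon\mathbf{C}_{uv}\to\mathbf{C}_v.
$$
A block computation of the type in Lemma~\ref{lem:CT} shows that $\mathbf{C}_u f=f\mathbf{C}_{uv}$ reduces to $Bv=vC$ and $\mathbf{C}_{uv}g=g\mathbf{C}_v$ reduces to $Au=uB$, so $f$ and $g$ are morphisms of $s(\mathcal{Y},\Bbbk)$ precisely because $v$ and $u$ are (properties (a),(c),(d) being inherited from $u$, $v$, $\textup{Id}_A$, $\textup{Id}_C$). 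Then I would verify that the octahedron diagram commutes: with these representatives every square is an equality between products of the elementary matrices $f$, $g$, $\iota_{(-)}$, $\pi_{(-)}$ and commutes \emph{already in $s(\mathcal{Y},\Bbbk)$} --- for instance $\iota_B f=v\iota_C$, $\iota_C g=\iota_C$, $f\pi_A=\pi_A$, and $\pi_A u=g\pi_B$ --- while the bottom triangle is the definition of $w'u'$.

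The substantive point is to show that the candidate triangle $\mathbf{C}_u\xrightarrow{f}\mathbf{C}_{uv}\xrightarrow{g}\mathbf{C}_v\xrightarrow{\pi_B\iota_B}[\mathbf{C}_u]$ is distinguished, i.e.\ isomorphic in $\kappa(\mathcal{Y},\Bbbk)$ to the standard triangle $\mathbf{C}_u\xrightarrow{f}\mathbf{C}_{uv}\xrightarrow{\iota_{\mathbf{C}_{uv}}}\mathbf{C}_f\xrightarrow{\pi_{\mathbf{C}_u}}[\mathbf{C}_u]$ attached to $f$. The object $\mathbf{C}_f$ carries, for each $i\in\mathcal{Y}$, the internal $4\times4$ block structure inherited from $\mathbf{C}_u$ and $\mathbf{C}_{uv}$, with blocks $\left(\begin{smallmatrix}A & -u & \textup{Id}_A & 0\\ 0 & -B & 0 & v\\ 0 & 0 & -A & uv\\ 0 & 0 & 0 & C\end{smallmatrix}\right)$; it differs from $\mathbf{C}_v$ by a ``contractible'' $\mathbf{C}_{\textup{Id}_A}$-block, which is a zero object of $\kappa(\mathcal{Y},\Bbbk)$ (compare Remark~\ref{rem:TR1-2}). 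I would then write down the morphisms $\phi\colon\mathbf{C}_v\to\mathbf{C}_f$ and $\psi\colon\mathbf{C}_f\to\mathbf{C}_v$ which, against those internal decompositions, have block matrices
$$
\phi=\left(\begin{smallmatrix}0 & \textup{Id}_B & 0 & 0\\ 0 & 0 & 0 & \textup{Id}_C\end{smallmatrix}\right),
\qquad
\psi=\left(\begin{smallmatrix}0 & 0\\ \textup{Id}_B & 0\\ u & 0\\ 0 & \textup{Id}_C\end{smallmatrix}\right),
$$
and verify, by the same kind of block arithmetic, that: $\phi$ and $\psi$ are morphisms; $\phi\psi=\textup{Id}_{\mathbf{C}_v}$; $\psi\phi\equiv\textup{Id}_{\mathbf{C}_f}$ with the $\kappa$-matrix whose only nonzero internal block is $\textup{Id}_A$ from the third to the first factor of $\mathbf{C}_f$; $g\phi\equiv\iota_{\mathbf{C}_{uv}}$ with the $\kappa$-matrix whose only nonzero internal block is $-\textup{Id}_A$ from the first factor of $\mathbf{C}_{uv}$ to the first factor of $\mathbf{C}_f$; and $\phi\,\pi_{\mathbf{C}_u}=\pi_B\iota_B$. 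These identities say exactly that $(\textup{Id}_{\mathbf{C}_u},\textup{Id}_{\mathbf{C}_{uv}},\phi,\textup{Id}_{[\mathbf{C}_u]})$ is an isomorphism of triangles in $\kappa(\mathcal{Y},\Bbbk)$ from the candidate triangle to the standard triangle on $f$; hence the candidate triangle is distinguished, and together with the commutativity of the octahedron this proves the proposition.

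The main obstacle is this middle step. One has to guess the correct $\phi$ and $\psi$ and, more delicately, the correct $\kappa$-matrices witnessing $\psi\phi\equiv\textup{Id}_{\mathbf{C}_f}$ and $g\phi\equiv\iota_{\mathbf{C}_{uv}}$: in contrast to every other identity in the argument, these are genuine equivalences modulo $\equiv$ and are \emph{not} equalities in $s(\mathcal{Y},\Bbbk)$. Morally they encode that the extra $\mathbf{C}_{\textup{Id}_A}$-summand of $\mathbf{C}_f$ dies in $\kappa(\mathcal{Y},\Bbbk)$ (as in Remark~\ref{rem:TR1-2}), but the contracting $\kappa$-matrices must be produced explicitly. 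Everything else --- checking that $f$, $g$, $\phi$, $\psi$ are morphisms, and checking the octahedron squares --- is routine bookkeeping with block matrices, of the same flavour as Lemmas~\ref{lem:morph} and~\ref{tr2:lemma}.
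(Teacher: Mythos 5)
Your proposal is correct and, up to the last step, coincides with the paper's own proof: the same reduction to standard triangles, the same connecting morphisms (your $f$, $g$ are exactly the paper's $F$, $G$), the same commutativity computations for the octahedron, and the same comparison map $\phi$ (the paper's $\Lambda$) from $\mathbf{C}_v$ to $\mathbf{C}_f$, with the same identities $\phi\,\pi_{\mathbf{C}_u}=\pi_B\iota_B$ and $g\phi\equiv\iota_{\mathbf{C}_{uv}}$ (the paper's witnessing $\kappa$-matrix has internal block $+\textup{Id}_A$ in position $(1,1)$; your $-\textup{Id}_A$ is the same datum, depending on which difference one writes). The one genuine divergence is how $\phi$ is shown to be invertible in $\kappa(\mathcal{Y},\Bbbk)$: the paper stops after the commutativity checks and appeals to the five lemma for pretriangulated categories, whereas you produce an explicit two-sided inverse $\psi$, with $\phi\psi=\textup{Id}_{\mathbf{C}_v}$ holding on the nose and $\psi\phi\equiv\textup{Id}_{\mathbf{C}_f}$ witnessed by the $\kappa$-matrix whose only nonzero internal block is $\textup{Id}_A$ in position $(3,1)$; I checked your matrices and they work ($\mathbf{C}_f\psi=\psi\mathbf{C}_v$ reduces to $Au=uB$, and $\mathbf{C}_fK+K\mathbf{C}_f$ for that $K$ equals $\textup{Id}_{\mathbf{C}_f}-\psi\phi$). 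Your route is the more self-contained one: the five lemma compares two \emph{distinguished} triangles, and here the distinguishedness of the row $(\mathbf{C}_u,\mathbf{C}_{uv},\mathbf{C}_v,f,g,\pi_B\iota_B)$ is precisely what is being established, so the paper's shortcut is delicate at exactly the point you flag as the ``substantive'' one; your explicit $\psi$ (morally, splitting off the contractible $\mathbf{C}_{\textup{Id}_A}$ summand of $\mathbf{C}_f$, in the spirit of Remark~\ref{rem:TR1-2}) closes that step directly, at the cost only of the additional block computations you correctly describe as routine.
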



\begin{proof}

Again, it suffices to prove the Octahedral axiom for standard triangles. First of all, let us show that there exists morphisms $F\in\textup{Hom}_{s(\mathcal{Y},\Bbbk)}(\mathbf{C}_S, \mathbf{C}_{ST})$ and $G\in\textup{Hom}_{s(\mathcal{Y},\Bbbk)}(\mathbf{C}_{ST}, \mathbf{C}_{T})$  such that the following diagram commutes in $\kappa(\mathcal{Y},\Bbbk)$

 $${\footnotesize{\xymatrix{A\ar[rr]^{S}\ar[d]_{\textup{Id}_A}   &&B\ar[rr]^{\iota_B}\ar[d]_{T}    &&\mathbf{C}_{S}\ar@{-->}[d]_{\exists F}\ar[rr]^-{\pi_A} &    &[A]\ar[d]^{\textup{Id}_A}\\
A\ar[rr]^{ST}\ar[d]_{S}  & &C\ar[rr]^-{\overline{\iota_C}}\ar[d]_-{\textup{Id}_C}    &&\mathbf{C}_{ST}\ar@{-->}[d]_{\exists G}\ar[rr]^-{\overline{\pi_{A}}}    & & [A]\ar[d]^{S} \\   
B\ar[rr]_{T}  & &C\ar[rr]_-{{\iota_C}}    &&\mathbf{C}_T\ar[rr]_-{\pi_B}    & &[B]
}}}$$
where, to avoid confusion,  $\overline{\iota_C}$ and  $\overline{\pi_A}$ denote the morphism $\iota_C:C\longrightarrow \mathbf{C}_{ST}$ and  $\pi_A:\mathbf{C}_{ST}\longrightarrow [A]$ defined to Lemma~\ref{lem:morph}.

Consider the matrices  
    $$
F = \left(\begin{array}{cc}
   (\textup{Id}_{A})^j_i & (\mathbf{0}_A^C)^j_i  \\
    (\mathbf{0}_B^A)^j_i & T^j_i  
\end{array}\right)_{i,j \in \mathcal{Y}}
\ \ \ \ \ \text{and }\ \ \ \ \ G = \left(\begin{array}{cc}
   S^j_i  & (\mathbf{0}_A^C)^j_i  \\
    (\mathbf{0}^B_C)^j_i & (\textup{Id}_C)^j_i  
\end{array}\right)_{i,j \in \mathcal{Y}}.
	$$
We only show that $F\in\textup{Hom}_{s(\mathcal{Y},\Bbbk)}(\mathbf{C}_S, \mathbf{C}_{ST})$, the proof for $G\in\textup{Hom}_{s(\mathcal{Y},\Bbbk)}(\mathbf{C}_{ST}, \mathbf{C}_{T})$ is analogous. The properties (a), (c) and (d) of the definition of morphism in $s(\mathcal{Y},\Bbbk)$,  follows from the fact that $T$ and $\textup{Id}_A$ are morphisms in $s(\mathcal{Y},\Bbbk)$. The equality $\mathbf{C}_SF=F\mathbf{C}_{ST}$ is a consequence of the following computations 
{\footnotesize{\begin{align*}
   ( \mathbf{C}_{S}F)^j_i&= \sum_{k\in \mathcal{Y}}\left(\begin{array}{cc}
   -A^k_i  & S^k_i  \\
    (\mathbf{0}^A_B)^k_i & B^k_i  
\end{array}\right)\left(\begin{array}{cc}
   (\textup{Id}_{A})^j_k & (\mathbf{0}_A^C)^j_k  \\
    (\mathbf{0}_B^A)^j_k & T^j_k  
\end{array}\right)
=\left(\begin{array}{cc}
   -A^j_i & (ST)^j_i  \\
    (\mathbf{0}^A_B)^j_i& (BT)^j_i  
\end{array}\right),
\end{align*}}}
{\footnotesize{\begin{align*}
   ( F\mathbf{C}_{ST})^j_i&= \sum_{k \in \mathcal{Y}}\left(\begin{array}{cc}
   (\textup{Id}_A)^k_i  & (\mathbf{0}^C_A)^k_i  \\
    (\mathbf{0}^A_B)^k_i & T^k_i 
\end{array}\right)\left(\begin{array}{cc}
   -A^j_k   & (ST)^j_k   \\
    (\mathbf{0}^A_C)^j_k & C^j_k   
\end{array}\right)=\left(\begin{array}{cc}
   -A^j_i  & (ST)^j_i  \\
     (\mathbf{0}^A_B)^j_i & (TC)^j_i  
\end{array}\right),
\end{align*}}}
for all $i,j \in \mathcal{Y}$.

The commutativity is straightforward by multiplication of matrices, for instance,
{\footnotesize{\begin{align*}
    (\iota_BF)^j_i &=\sum_{k\in \mathcal{Y}}\left(\begin{array}{cc}
   (\mathbf{0}_B^A)^k_i & (\text{Id}_{B})^k_i  \\
\end{array}\right)\left(\begin{array}{cc}
   (\textup{Id}_A)^j_k  & (\mathbf{0}^C_A)^j_k  \\
    (\mathbf{0}^A_B)^j_k & T^j_k 
\end{array}\right)=\left(\begin{array}{cc}
   (\mathbf{0}_B^A)^j_i & T^j_i
\end{array}\right)\\
&=\sum_{k\in \mathcal{Y}}T_i^k\left(\begin{array}{cc}
   (\mathbf{0}_C^A)^j_k & (\textup{Id}_C)^j_k
\end{array}\right)= (T\overline{\iota_{C}})^j_i,
\end{align*}}}
and  
{\footnotesize{\begin{align*}
    (G\pi_B)^j_i &= \sum_{k\in \mathcal{Y}}\left(\begin{array}{cc}
       S^k_i  & (\mathbf{0}^B_A)^k_i  \\
    (\mathbf{0}_C^B)^k_i & (\text{Id}_C)^k_i  
\end{array}\right)\left(\begin{array}{cc}
   (\text{Id}_{B})^j_k \\
    (\mathbf{0}_C^B)^j_k 
\end{array}\right)=\left(\begin{array}{cc}
    S^j_i \\
    (\mathbf{0}_C^B)^j_i
\end{array}\right)=\sum_{k\in \mathcal{Y}}\left(\begin{array}{cc}
    (\textup{Id}_A)^k_i \\
    (\mathbf{0}_C^A)^k_i
\end{array}\right)S^j_k=(\overline{\pi_A}S)^j_i.
\end{align*}}}
 for all $i,j \in \mathcal{Y}$.
 
 Finally, let us show that
    $$
    \Lambda=\left(\begin{array}{ccccc}
     (\mathbf{0}_{B}^A)^j_i& (\textup{Id}_B)^j_i &
     (\mathbf{0}_{B}^A)^j_i& (\mathbf{0}^C_B)^j_i\\
     (\mathbf{0}^A_C)^j_i& (\mathbf{0}^B_C)^j_i &
     (\mathbf{0}^A_C)^j_i & (\textup{Id}_C)^j_i 
\end{array}\right)_{i,j \in \mathcal{Y}}\in\textup{Hom}_{s(\mathcal{Y},\Bbbk)}(\mathbf{C}_T,\mathbf{C}_F)
    $$
and that it is an isomorphism in $\kappa(\mathcal{Y},\Bbbk)$ such that the following diagram commutes in $\kappa(\mathcal{Y},\Bbbk)$
{\footnotesize{$$\xymatrix@C=15pt{	
	{\mathbf{C}_S}\ar[rr]^-{F}\ar[d]_-{\textup{Id}_{\mathbf{C}_S}}	&&	{\mathbf{C}_{ST}}\ar[rr]^-{G}\ar[d]_-{\textup{Id}_{\mathbf{C}_{ST}}}		&&	
 {\mathbf{C}_{T}}\ar[rr]^-{\pi_B\iota_B}\ar@{-->}[d]_-{\exists \Lambda}	&	&{[\mathbf{C}_S]}\ar[d]^-{\textup{Id}_{\mathbf{C}_S}}		\\
	 {\mathbf{C}_S}\ar[rr]_-{F}&&{\mathbf{C}_{ST}}\ar[rr]_-{\iota_{\mathbf{C}_{ST}}} &&
 {\mathbf{C}_{F}}\ar[rr]_-{\pi_{\mathbf{C}_S}}	&&{[\mathbf{C}_S]}}$$}}

The properties (a), (c) and (d) of the definition of morphism in $s(\mathcal{Y},\Bbbk)$,  follows from the fact that $\textup{Id}_{\mathbf{C}_S}$ and $\textup{Id}_{\mathbf{C}_{ST}}$ are morphisms in $s(\mathcal{Y},\Bbbk)$. The equality $\mathbf{C}_T\Lambda=\Lambda\mathbf{C}_{F}$ is a consequence of the following computations 
{\footnotesize{\begin{align*}
    (\Lambda\mathbf{C}_{F} )^j_i& =\sum_{k\in \mathcal{Y}}\left(\begin{array}{ccccc}
     (\mathbf{0}_{B}^A)^k_i& (\textup{Id}_B)^k_i &
     (\mathbf{0}_{B}^A)^k_i& (\mathbf{0}^C_B)^k_i\\
     (\mathbf{0}^A_C)^k_i& (\mathbf{0}^B_C)^k_i &
     (\mathbf{0}^A_C)^k_i & (\textup{Id}_C)^k_i 
\end{array}\right)\left(\begin{array}{cc}
      -(\mathbf{C}_S)^j_k    & F^j_k \\
        (\mathbf{0}^{\mathbf{C}_S}_{\mathbf{C}_{ST}})^j_k  & (\mathbf{C}_{ST})^j_k
     \end{array}\right)\\
    &=\sum_{k\in \mathcal{Y}}\left(\begin{array}{ccccc}
     (\mathbf{0}_{B}^A)^k_i& (\textup{Id}_B)^k_i &
     (\mathbf{0}_{B}^A)^k_i& (\mathbf{0}^C_B)^k_i\\
     (\mathbf{0}^A_C)^k_i& (\mathbf{0}^B_C)^k_i &
     (\mathbf{0}^A_C)^k_i & (\textup{Id}_C)^k_i 
\end{array}\right)\left(\begin{array}{cccc}
        A^j_k  & -S^j_k& (\textup{Id}_A)^j_k& (\mathbf{0}_{A}^C)^j_k  \\
          (\mathbf{0}^{A}_B)^j_k  &-B^j_k & (\mathbf{0}^{A}_B)^j_k & T^j_k \\
             (\mathbf{0}^{A}_A)^j_k &(\mathbf{0}^{B}_A)^j_k &-A^j_k & (ST)^j_k  \\
                 (\mathbf{0}^{A}_C)^j_k& (\mathbf{0}^{B}_C)^j_k&(\mathbf{0}^{A}_C)^j_k &C^j_k  \\
     \end{array}\right)\\
     &=\left(\begin{array}{ccccc}
     (\mathbf{0}_{B}^A)^j_i& -B^j_i &
     (\mathbf{0}_{B}^A)^j_i& T^j_i\\
     (\mathbf{0}^A_C)^j_i& (\mathbf{0}^B_C)^j_i &
     (\mathbf{0}^A_C)^j_i & C^j_i 
\end{array}\right)\\
&=\sum_{k\in \mathcal{Y}}\left(\begin{array}{cc}
    -B^k_i & T^k_i  \\
    (\mathbf{0}^B_C)^k_i &  C^k_i
\end{array}\right)\left(\begin{array}{ccccc}
     (\mathbf{0}_{B}^A)^j_k& (\textup{Id}_B)^j_k &
     (\mathbf{0}_{B}^A)^j_k& (\mathbf{0}^C_B)^j_k\\
     (\mathbf{0}^A_C)^j_k& (\mathbf{0}^B_C)^j_k &
     (\mathbf{0}^A_C)^j_k & (\textup{Id}_C)^j_k 
\end{array}\right)\\
&=( \mathbf{C}_{T}\Lambda)^j_i,
\end{align*}}}
for all $i,j \in \mathcal{Y}$.

The commutativity is straightforward by multiplication of matrices, for instance 
{\footnotesize{\begin{align*}
(\iota_{\mathbf{C}_{ST}}-G\Lambda)^j_i & =\left(\begin{array}{ccccc}   (\mathbf{0}_{\mathbf{C}_{ST}}^{\mathbf{C}_{S}})^j_i& (\textup{Id}_{\mathbf{C}_{ST}})^j_i
\end{array}\right) \\
&\ \ \ \ \ -\sum_{k\in \mathcal{Y}}\left(\begin{array}{cc}
    S^k_i & (\mathbf{0}^C_A)^k_i  \\
    (\mathbf{0}^B_C)^k_i &  (\textup{Id}_C)^k_i
\end{array}\right)\left(\begin{array}{ccccc}
     (\mathbf{0}_{B}^A)^j_k& (\textup{Id}_B)^j_k &
     (\mathbf{0}_{B}^A)^j_k& (\mathbf{0}^C_A)^j_k\\
     (\mathbf{0}^A_C)^j_k& (\mathbf{0}^B_C)^j_k &
     (\mathbf{0}^A_C)^j_k & (\textup{Id}_C)^j_k 
\end{array}\right)\\
&=\left(\begin{array}{ccccc}
     (\mathbf{0}_{A}^A)^j_i& (\mathbf{0}_{A}^B )^j_i &
     (\textup{Id}_A)^j_i& (\mathbf{0}^C_A)^j_i\\
     (\mathbf{0}^A_C)^j_i& (\mathbf{0}^B_C)^j_i &
     (\mathbf{0}^A_C)^j_i & (\textup{Id}_C)^j_i 
\end{array}\right)-\left(\begin{array}{ccccc}
     (\mathbf{0}_{A}^A)^j_i& S^j_i &
     (\mathbf{0}_{A}^A)^j_i& (\mathbf{0}_{A}^C)^j_i\\
     (\mathbf{0}^A_C)^j_i& (\mathbf{0}^B_C)^j_i &
     (\mathbf{0}^A_C)^j_i & (\textup{Id}_C)^j_i 
\end{array}\right)\\
&= \left(\begin{array}{ccccc}
     (\mathbf{0}_{A}^A)^j_i& -S^j_i &
     (\textup{Id}_A)^j_i& (\mathbf{0}_{A}^C)^j_i\\
     (\mathbf{0}^A_C)^j_i& (\mathbf{0}^B_C)^j_i &
     (\mathbf{0}^A_C)^j_i & (\mathbf{0}^C_C)^j_i 
\end{array}\right)\\
&=\left(\begin{array}{ccccc}
     -A^j_i & (\mathbf{0}_A^B)^j_i &
    (\mathbf{0}_{A}^A)^j_i & (\mathbf{0}^C_A)^j_i\\
     (\mathbf{0}^A_C)^j_i& (\mathbf{0}^B_C)^j_i &
     (\mathbf{0}^A_C)^j_i & (\mathbf{0}_{C}^C)^j_i 
\end{array}\right)+\left(\begin{array}{ccccc}
     A^j_i & -S^j_i &
    (\textup{Id}_A)^j_i & (\mathbf{0}^C_A)^j_i\\
     (\mathbf{0}^A_C)^j_i& (\mathbf{0}^B_C)^j_i &
     (\mathbf{0}^A_C)^j_i & (\mathbf{0}_{C}^C)^j_i 
\end{array}\right)\\
&=\sum_{k \in \mathcal{Y}}\left(\begin{array}{cc}
   -A^k_i   & (ST)^k_i   \\
    (\mathbf{0}^A_C)^k_i & C^k_i   
\end{array}\right)\left(\begin{array}{ccccc}
    (\textup{Id}_A)^j_k & (\mathbf{0}_{A}^B )^j_k &
     (\mathbf{0}_{A}^A)^j_k& (\mathbf{0}^C_A)^j_k\\
     (\mathbf{0}^A_C)^j_k& (\mathbf{0}^B_C)^j_k &
     (\mathbf{0}^A_C)^j_k & (\mathbf{0}_{C}^C)^j_k
\end{array}\right)\\
&+\sum_{k \in \mathcal{Y}}\left(\begin{array}{ccccc}
     (\textup{Id}_A)^k_i & (\mathbf{0}_{A}^B )^k_i &
     (\mathbf{0}_{A}^A)^k_i& (\mathbf{0}^C_A)^k_i\\
     (\mathbf{0}^A_C)^k_i& (\mathbf{0}^B_C)^k_i &
     (\mathbf{0}^A_C)^k_i & (\mathbf{0}_{C}^C)^k_i 
\end{array}\right)\left(\begin{array}{cccc}
        A^j_k  & -S^j_k& (\textup{Id}_A)^j_k& (\mathbf{0}_{A}^C)^j_k  \\
          (\mathbf{0}^{A}_B)^j_k  &-B^j_k &(\mathbf{0}^{A}_B)^j_k & T^j_k \\
             (\mathbf{0}^{A}_A)^j_k &(\mathbf{0}^{B}_A)^j_k &-A^j_k & (ST)^j_k  \\
                 (\mathbf{0}^{A}_C)^j_k& (\mathbf{0}^{B}_C)^j_k&(\mathbf{0}^{A}_C)^j_k &C^j_k  \\
     \end{array}\right)\\
     &=(\mathbf{C}_{ST}K+K\mathbf{C}_{F})^j_i,
\end{align*}}}
for all $i,j \in \mathcal{Y}$, where 
    \begin{align*}
    K=\left(\begin{array}{ccccc}
     (\textup{Id}_A)^j_i& (\mathbf{0}_{A}^B )^j_i &
     (\mathbf{0}_A^A)^j_i & (\mathbf{0}^C_A)^j_i\\
     (\mathbf{0}^A_C)^j_i& (\mathbf{0}^B_C)^j_i &
     (\mathbf{0}^A_C)^j_i & (\mathbf{0}_C^C)^j_i 
\end{array}\right)_{i,j \in \mathcal{Y}}
\end{align*}
is a $\kappa$-matrix. Moreover
{\footnotesize{\begin{align*}
    (\Lambda\pi_{\mathbf{C}_{S}})^j_i&= \sum_{k\in \mathcal{Y}}\left(\begin{array}{ccccc}
     (\mathbf{0}_{B}^A)^k_i& (\textup{Id}_B)^k_i &
     (\mathbf{0}_{B}^A)^k_i& (\mathbf{0}^C_B)^k_i\\
     (\mathbf{0}^A_C)^k_i& (\mathbf{0}^B_C)^k_i &
     (\mathbf{0}^A_C)^k_i & (\textup{Id}_C)^k_i 
\end{array}\right)\left(\begin{array}{cc}
    (\textup{Id}_{\mathbf{C}_S})^j_k \\
    (\mathbf{0}^{\mathbf{C}_S}_{\mathbf{C}_{ST}})^j_k 
\end{array}\right)\\ 
&=\sum_{k\in \mathcal{Y}}\left(\begin{array}{ccccc}
     (\mathbf{0}_{B}^A)^k_i& (\textup{Id}_B)^k_i &
     (\mathbf{0}_{B}^A)^k_i& (\mathbf{0}^C_B)^k_i\\
     (\mathbf{0}^A_C)^k_i& (\mathbf{0}^B_C)^k_i &
     (\mathbf{0}^A_C)^k_i & (\textup{Id}_C)^k_i 
\end{array}\right)\left(\begin{array}{cc}
    (\textup{Id}_{A})^j_k & (\mathbf{0}_A^B)^j_k  \\
    (\mathbf{0}^A_B)^j_k & (\textup{Id}_{B})^j_k\\
      (\mathbf{0}_A^A)^j_k & (\mathbf{0}_A^B)^j_k  \\
    (\mathbf{0}^A_C)^j_k & (\mathbf{0}_C^B)^j_k\\
\end{array}\right)\\
&= \left(\begin{array}{cc}
    (\mathbf{0}_B^A)^j_i & (\textup{Id}_{B})^j_i  \\
    (\mathbf{0}^A_C)^j_i & (\mathbf{0}_C^B)^j_i\\
    \end{array}\right)= \sum_{k\in \mathcal{Y}}\left(\begin{array}{cc}
   (\textup{Id}_{B})^k_i \\ (\mathbf{0}^B_C)^k_i 
\end{array}\right)\left(\begin{array}{cc}
  (\mathbf{0}^B_A)^j_k & (\textup{Id}_{B})^j_k  
    \end{array}\right)\\
    & = (\pi_B\iota_B)^j_i,
\end{align*}}}
for all $i,j \in \mathcal{Y}$. Therefore, the proof is done, since $\textup{Id}_{\mathbf{C}_S}$ and $\textup{Id}_{\mathbf{C}_{ST}}$ are isomorphism in $\kappa(\mathcal{Y},\Bbbk)$ and five lemma for pretriangulated category.

\end{proof}


From Remark~\ref{rem:TR1-2} and propositions~\ref{prop:TR2}, \ref{prop:TR3} and \ref{prop:Octae} we can conclude the main result of this section.


\begin{theorem}\label{main 1}
    The category $\kappa(\mathcal{Y},\Bbbk)$ with the autofunctor $[-]:\kappa(\mathcal{Y},\Bbbk)\longrightarrow \kappa(\mathcal{Y},\Bbbk)$ and the family of distinguised triangles {\huge{$\tau$}} is a triangulated category.
\end{theorem}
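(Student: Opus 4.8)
The plan is to verify that the additive $\Bbbk$-category $\kappa(\mathcal{Y},\Bbbk)$, together with the autofunctor $[-]$ and the family $\tau$ of distinguished triangles, satisfies the axioms (TR1)--(TR4) of a triangulated category in the form used in \cite{DW17} or \cite{Hap88}. Two prerequisites are already in place: $\kappa(\mathcal{Y},\Bbbk)$ is additive, and $[-]$ is a well-defined additive automorphism --- in fact an involution, since $[[B]]=B$ and $[[T]]=T$, and it commutes with the direct sum of Proposition~\ref{prop:soma}. Thus the proof consists of checking the four axioms, and essentially all of the genuine work has already been done in the preceding results.

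Axiom (TR1) I would handle in three steps. Closure of $\tau$ under isomorphism of triangles is immediate from the definition of $\tau$ and transitivity of triangle isomorphism in $\kappa(\mathcal{Y},\Bbbk)$. Given a morphism $u$ in $\kappa(\mathcal{Y},\Bbbk)$ from $X$ to $Y$, choose a representative $T\in\textup{Hom}_{s(\mathcal{Y},\Bbbk)}(X,Y)$ and take the standard triangle $(X,Y,\mathbf{C}_T,T,\iota_Y,\pi_X)$; it lies in $\tau$ by definition and is built on $u$, so every morphism embeds in a distinguished triangle. For the triangle $(X,X,\mathbb{O},\textup{Id}_X,\mathbf{0},\mathbf{0})$: by Remark~\ref{rem:TR1-2} we have $\textup{Id}_{\mathbf{C}_{\textup{Id}_X}}\equiv\mathbf{0}$, hence $\mathbf{C}_{\textup{Id}_X}\cong\mathbb{O}$ in $\kappa(\mathcal{Y},\Bbbk)$; since $\mathbb{O}$ is a zero object, the connecting maps of the standard triangle on $\textup{Id}_X$ vanish in the quotient, so that standard triangle is isomorphic in $\kappa(\mathcal{Y},\Bbbk)$ to $(X,X,\mathbb{O},\textup{Id}_X,\mathbf{0},\mathbf{0})$, which is therefore distinguished.

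Rotation (TR2) is precisely Proposition~\ref{prop:TR2}; if the inverse rotation is also demanded by the axiom system, it is a formal consequence of (TR1), (TR3), two applications of Proposition~\ref{prop:TR2} and the five lemma for pretriangulated categories (complete a representative of the given first map to a standard triangle, rotate both triangles forward twice, apply (TR3), note by the five lemma that the resulting fill-in is an isomorphism, and transport it back). The fill-in axiom (TR3) is Proposition~\ref{prop:TR3}, and the octahedral axiom (TR4) is Proposition~\ref{prop:Octae}. Since (TR1)--(TR4) hold, $\kappa(\mathcal{Y},\Bbbk)$ is a triangulated category. I expect no real obstacle here: the substantive content --- the explicit comparison morphisms and $\kappa$-matrices, and the verification of the required identities by block-matrix computation --- is already contained in Propositions~\ref{prop:TR2}--\ref{prop:Octae}, so the theorem is essentially a bookkeeping of these results; the only mildly nonformal point in the assembly is the identity-triangle case of (TR1), settled by passing to the quotient where $\mathbf{C}_{\textup{Id}_X}$ becomes a zero object, and the one thing to watch is that every triangle used to establish (TR2), (TR3) and (TR4) is read up to the equivalence $\equiv$, i.e.\ as a triangle in $\kappa(\mathcal{Y},\Bbbk)$ rather than in $s(\mathcal{Y},\Bbbk)$.
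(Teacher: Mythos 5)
Your proposal is correct and follows essentially the same route as the paper, whose proof of Theorem~\ref{main 1} is exactly the assembly of Remark~\ref{rem:TR1-2} (for the identity-triangle and embedding parts of (TR1)) with Propositions~\ref{prop:TR2}, \ref{prop:TR3} and \ref{prop:Octae} for rotation, fill-in and the octahedron; your extra remarks on closure under isomorphism, on $\mathbf{C}_{\textup{Id}_X}\cong\mathbb{O}$ in $\kappa(\mathcal{Y},\Bbbk)$, and on deducing the inverse rotation formally only spell out details the paper leaves implicit.
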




\section*{Acknowledgements}
The first author was partially supported by the Coordena\c{c}\~{a}o de Aperfei\c coamento de Pessoal de N\'ivel Superior -- Brasil (CAPES) --  Finance Code 001. The second author has been partially supported by Fundação de Amparo à Pesquisa do Estado do Amazonas (FAPEAM). The third author has been partially supported by Conselho Nacional de Desenvolvimento Cient\'ifico e Tecnol\'ogico (CNPq) (Grant $\#$140582/2021-5).



\begin{thebibliography}{ABC99}
\bibitem[BNM03]{BNM03} Viktor Bekkert, Eduardo N. Marcos, and H\'ector A. Merklen. Indecomposables in derived categories of skewed-
gentle algebras. Comm. Algebra, 31(6):2615–2654, 2003

\bibitem[BM03]{BM03} Viktor Bekkert and H\'ector A. Merklen. Indecomposables in derived categories of gentle algebras. Algebr. Repre-
sent. Theory, 6(3):285–302, 2003

\bibitem[Bon75]{Bon75} V. M. Bondarenko. Representations of dihedral groups over a field of characteristic 2. Mat. Sb. (N.S.), 96(138):63–
74, 167, 1975.
\bibitem[Bon88]{Bon88} V. M. Bondarenko. Bundles of semichained sets and their representations. Akad. Nauk Ukrain. SSR Inst. Mat.
Preprint, (60):32, 1988
\bibitem[Bon91]{Bon91} V. M. Bondarenko. Representations of bundles of semichained sets and their applications. Algebra i Analiz,
3(5):38–61, 1991

\bibitem[BD82]{BD82}  V. M. Bondarenko and Ju. A. Drozd. The representation type of finite groups. Zap. Nauˇcn. Sem. Leningrad.
Otdel. Mat. Inst. Steklov. (LOMI), 71:24–41, 282, 1977. Modules and representations.

\bibitem[DW17]{DW17} Harm Derksen and Jerzy Weyman. An introduction to quiver representations, volume 184 of Graduate Studies
in Mathematics. American Mathematical Society, Providence, RI, 2017.

\bibitem[FGR21]{FGR21}  Andr\'es Franco, Hern\'an Giraldo, and Pedro Rizzo. String and band complexes over string almost gentle algebras.
Appl. Categ. Structures, 30(3):417–452, 2022.

\bibitem[Gel71]{Gel71} I. M. Gel’fand. The cohomology of infinite dimensional Lie algebras: some questions of integral geometry. In Actes
du Congr\'es International des Math\'ematiciens (Nice, 1970), Tome 1, pages 95–111. Gauthier-Villars  \'Editeur,
Paris, 1971.

\bibitem[GV16]{GV16}  Hern\'an Giraldo and Jos\'e A. V\'elez-Marulanda. String and band complexes over certain algebra of dihedral type.
Algebr. Represent. Theory, 19(2):419–433, 2016.

\bibitem[Hap88]{Hap88} Dieter Happel. Triangulated categories in the representation theory of finite-dimensional algebras, volume 119 of
London Mathematical Society Lecture Note Series. Cambridge University Press, Cambridge, 1988.

\bibitem[NR73]{NR73}  L. A. Nazarova and A. V. Roiter. A certain problem of I. M. Gel’fand. Funkcional. Anal. i Prilozen., 7(4):54–69,
1973.

\bibitem[Rot09]{Rot09}  Joseph J. Rotman. An introduction to homological algebra. Universitext. Springer, New York, second edition,
2009.
\end{thebibliography}


\end{document}